\newcommand{\parenthesis}[1]{\left(#1\right)} 
\newcommand{\braces}[1]{\left\{#1\right\}} 
\newcommand{\R}{\mathbb{R}}
\newcommand{\dd}{\mathop{}\!\mathrm{d}}
\newcommand{\p}{\partial}
\newcommand{\sph}{\mathbb{S}}
\newcommand{\average}[1]{\left<#1\right>_{_{\lambda}}} 
\newcommand{\oscil}[1]{\left[#1\right]_{_{\lambda}}} 
\DeclareMathOperator{\diag}{diag}
\DeclareMathOperator{\ds}{\dd{s}}
\DeclareMathOperator{\dx}{dx}
\DeclareMathOperator{\Ker}{Ker}
\DeclareMathOperator{\Mat}{Mat} 
\DeclareMathOperator{\rank}{rank}
\DeclareMathOperator{\Span}{Span}
\newtheorem{thm}{Theorem}[section]
\newtheorem{dfn}[thm]{Definition}
\newtheorem{lemma}[thm]{Lemma}
\newtheorem{prop}[thm]{Proposition}
\newtheorem{rmk}[thm]{Remark}
\newtheorem{theorem}{Theorem}
\def\ii{\infty}
\def\R{\mathbb{R}}
\def\m1{{I\!\!M}}
\renewcommand{\to}{\rightarrow}
\newcommand{\rife}[1]{(\ref{#1})}
\newcommand{\ov}[1]{\overline{#1}}
\newcommand{\intbar}{\mathop{\int\makebox(-15.5,0){\rule[6pt]{.7em}{0.3pt}}%
\kern-6pt}\nolimits}
\newcommand{\al}{\alpha}
\newcommand{\sg}{\sigma}
\newcommand{\om}{\Omega}
\newcommand{\lm}{\lambda}
\newtheorem{proposition}[theorem]{Proposition}
\newtheorem{corollary}[theorem]{Corollary}
\newtheorem{remark}[theorem]{Remark}
\newtheorem{definition}[theorem]{Definition}
\newcommand{\brm}{\begin{remark}\rm}
\newcommand{\erm}{\end{remark}}
\newcommand{\bdf}{\begin{definition}\rm}
\newcommand{\edf}{\end{definition}}
\newcommand{\bte}{\begin{theorem}}
\newcommand{\ete}{\end{theorem}}
\newcommand{\bpr}{\begin{proposition}}
\newcommand{\epr}{\end{proposition}}
\newcommand{\ble}{\begin{lemma}}
\newcommand{\ele}{\end{lemma}}
\newcommand{\bco}{\begin{corollary}}
\newcommand{\eco}{\end{corollary}}
\newcommand{\beq}{\begin{equation}}
\newcommand{\eeq}{\end{equation}}
\newcommand{\bdm}{\begin{displaymath}}
\newcommand{\edm}{\end{displaymath}}
\def\sideremark#1{\ifvmode\leavevmode\fi\vadjust{\vbox to0pt{\vss
 \hbox to 0pt{\hskip\hsize\hskip1em \vbox{\hsize2.1cm\tiny\raggedright\pretolerance10000 \noindent #1\hfill}\hss}\vbox to15pt{\vfil}\vss}}}
\begin{document}

\numberwithin{equation}{section}
\hfuzz=2pt
\frenchspacing

\title[]{A Courant nodal domain theorem for linearized mean field type equations}

\author[D.Bartolucci]{Daniele Bartolucci$^{\dag}$}
\address{Daniele Bartolucci, Department of Mathematics, University of Rome {\it "Tor Vergata"}, \\  Via della ricerca scientifica n.1, 00133 Roma, Italy. }
\email{bartoluc@mat.uniroma2.it}

\author[A. Jevnikar]{Aleks Jevnikar}
\address{Aleks Jevnikar, Department of Mathematics, Computer Science and Physics, University of Udine, Via delle Scienze 206, 33100 Udine, Italy.}
\email{aleks.jevnikar@uniud.it}

\author[R. Wu]{Ruijun Wu}
\address{Ruijun Wu, School of Mathematics and Statistics, Beijing Institute of Technology, Zhongguancun South Street No. 5, Haidian District, Beijing, P.R. China.}
\email{ruijun.wu@bit.edu.cn}

\thanks{2000 \textit{Mathematics Subject classification:} 35B45, 35J60, 35J99. }


\thanks{$^{(\dag)}$Research partially supported by the MIUR Excellence Department Project\\ MatMod@TOV
awarded to the Department of Mathematics, University of Rome Tor Vergata.}

\begin{abstract}
We are concerned with the analysis of a mean field type equation and its linearization, which is a nonlocal operator, for which we estimate the number of nodal domains for the radial eigenfunctions and the related uniqueness properties.
\end{abstract}
\maketitle
{\bf Keywords}: Nodal domain theorem, radial eigenfunction, mean field type equations

\

\section{Introduction}
\setcounter{equation}{0}

Given a~$C^2$ function~$f\colon [0,+\infty)\to [0,\infty)$, satisfying $f'>0$ in $(0,+\ii)$
and for a fixed~$\lambda\ge 0$, on a smooth bounded domain~$\Omega\subset \R^n$,~$n\ge 2$,  we consider the constrained problem in the unknowns~$(\alpha,\psi)$:
\begin{align}\label{eq:MFE}
 \begin{cases}
  -\Delta \psi= f(\alpha+\lambda\psi), & \mbox{ in } \Omega, \\
  \int_{\Omega} f(\alpha+\lambda\psi)\dx =1, &  \\
   \alpha>0, & \\
  \psi>0 & \mbox{ in } \Omega, \\
  \psi=0, & \mbox{ on } \p\Omega.
 \end{cases}
\end{align}
For a fixed $\lm$, by definition a solution of~\eqref{eq:MFE} is a pair $(\alpha_\lambda,\psi_\lambda)$ where
$\psi_\lambda$ is a classical~$C^{2}(\ov{\om})$ solution of the elliptic equation. Let $(\alpha_\lambda,\psi_\lambda)$ be any such solution and set
\begin{align}
 V_{\lambda}
 = f'(\alpha_\lambda+\lambda\psi_\lambda) \in C^1(\Omega),
\end{align}
so that by our assumptions~$V_\lambda>0$ in~$\overline{\Omega}$.
In applications it also happens that~$V_\lambda>0$ in~$\Omega$ with~$V_\lambda$ vanishing on the boundary~$\p \Omega$, which will be particularly discussed in the concluding section.
Typical examples include $f(t)=e^t$ which yields to the well known mean field equations in dimension two,
see for example \cite{BGJM-MathAnn(2019),BJLY-ARMA(2018),BT-JDE(2019),ChenLinCPAM(2015)} and references quoted therein, as well as~$f(t)= t^p$ for some~$p\ge 1$ in general dimension, which is particularly relevant for the analysis of problems arising in plasma physics, see \cite{BJ1,BJ2} and references therein.

\

The linearized operator associated to~\eqref{eq:MFE} takes the form
\begin{align}
 L_\lambda(\phi)= -\Delta \phi-\lambda V_\lambda \oscil{\phi}
\end{align}
where
\begin{align}
 \oscil{\phi}=\phi-\average{\phi}, & \mbox{ with }
 \average{\phi}= \int_\Omega \frac{V_\lambda\phi}{\int_\Omega V_\lambda}.
\end{align}
The average term, which is a linear but non-local term, shows up due to the volume constraint in~\eqref{eq:MFE}.
Let~$\sigma$ be an eigenvalue of~$L_\lambda$, and~$\phi\in H^1_0(\Omega)\setminus \{0\}$ be an eigenfunction of~$\sigma$, that is by definition a weak solution of
\begin{align}\label{eq:eigenfunction}
 -\Delta\phi-\lambda V_\lambda \oscil{\phi}=\sigma V_\lambda\oscil{\phi}.
\end{align}
We will denote by $\sigma_{1,\lambda}$ the first eigenvalue of \rife{eq:eigenfunction} (see \rife{eq:first eigenvalue} below for definition).
Consider $f(t)=t^p$ and $\om$ a two-dimensional disk.
A natural question in this particular case arises from the results in \cite{BJ1,BJ2}, concerning a
problem in plasma physics, which asks whether or not
$\sigma_{1,\lambda}>0$ for any $\lm<\lm_*$, where $\lm_*$ is an explicit threshold depending only on the best constant of the Sobolev embedding $H^1_0(\om)\hookrightarrow L^{2p}(\om)$. This would imply, among other things, nice energy monotonicity properties which are the analogue of those arising in the context of classical mean field equations for $\lm<8\pi$, see \cite{B2,BartolucciJevnikar2021global} and references therein. This is our initial motivation to obtain refined information about the spectral properties of~$L_\lambda$, in particular for radial eigenfunctions on a disk.
However, as far as we know, some of the classical results at hand for "standard" eigenvalue problems, as for example the Courant nodal domain theorem \cite{Pleijel1956remarks}, and consequently neither the multiplicity of eigenfunctions \cite{Nadirashvili1987multiple}, are available so far about \rife{eq:eigenfunction}, as we discuss here after.

\bigskip

Integrating~\eqref{eq:eigenfunction} on the domain~$\Omega$ by parts gives
\begin{align}
 \int_{\p\Omega} \frac{\p\phi}{\p\nu}\ds = 0.
\end{align}
Thus, either~$\p_\nu \phi$ changes sign on~$\p\Omega$ or~$\p_\nu\phi\equiv 0$ on~$\p \Omega$.
In the former case, since~$\phi\in H^1_0(\Omega)$, we see that~$\phi$ also changes sign in~$\Omega$ and hence has at least two nodal domains. The observation which motivates part of this work is that in fact the latter case may also happen.

Note that the case~$\p_\nu\phi\equiv 0$ may only happen if~$\average{\phi}\neq 0$.
Indeed, as far as~$\average{\phi}=0$, the non-local character of~\eqref{eq:eigenfunction} drops out and the the classical Hopf lemma implies that~$\phi\equiv 0$ in~$\Omega$.
On the other side, if~$\average{\phi}\neq 0$, by the Hopf lemma we find that,
\begin{lemma}\label{lem1.1}
 Let~$\phi$ be an eigenfunction of~$\sigma$, i.e.~\eqref{eq:eigenfunction} holds.
 Let~$\Omega_1$ be a nodal domain satisfying an interior sphere condition at~$x_0\in \p \Omega_1$.
 If~$\phi<0$ in~$\Omega_1$ and~$\average{\phi}>0$, then
 \begin{align}
  \left.\frac{\p\phi}{\p \nu}\right|_{x_0} >0.
 \end{align}

\end{lemma}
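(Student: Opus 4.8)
The plan is to reduce the statement to the classical Hopf boundary point lemma applied to~$\phi$ on the nodal domain~$\Omega_1$. Since~$\phi<0$ in~$\Omega_1$ while~$\phi(x_0)=0$ (either~$x_0$ lies on the nodal set or on~$\p\Omega$, where the Dirichlet condition forces~$\phi=0$), the point~$x_0$ is a strict maximum of~$\phi$ over~$\overline{\Omega_1}$. Consequently, once we know that~$\phi$ is subharmonic in~$\Omega_1$, the interior sphere condition at~$x_0$ will force~$\partial_\nu\phi(x_0)>0$ for the outer normal~$\nu$ of~$\Omega_1$. The whole difficulty is therefore to control the sign of~$-\Delta\phi$ in~$\Omega_1$. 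Rewriting \eqref{eq:eigenfunction} as~$-\Delta\phi=(\lambda+\sigma)V_\lambda\oscil{\phi}=(\lambda+\sigma)V_\lambda(\phi-\average{\phi})$ and using~$V_\lambda>0$, $\phi<0$ and~$\average{\phi}>0$ in~$\Omega_1$, one gets~$V_\lambda(\phi-\average{\phi})<0$ pointwise; hence the sign of~$-\Delta\phi$ equals that of~$-(\lambda+\sigma)$, and everything hinges on proving that~$\lambda+\sigma>0$.

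To establish~$\lambda+\sigma>0$ I would test the weak formulation of \eqref{eq:eigenfunction} with~$\phi$ itself, which gives~$\int_\Omega|\nabla\phi|^2=(\lambda+\sigma)\int_\Omega V_\lambda\oscil{\phi}\,\phi$. The key algebraic observation is that~$\int_\Omega V_\lambda\oscil{\phi}=\int_\Omega V_\lambda\phi-\average{\phi}\int_\Omega V_\lambda=0$ by the very definition of~$\average{\phi}$, so that writing~$\phi=\oscil{\phi}+\average{\phi}$ yields~$\int_\Omega V_\lambda\oscil{\phi}\,\phi=\int_\Omega V_\lambda\oscil{\phi}^2$. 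Thus~$(\lambda+\sigma)\int_\Omega V_\lambda\oscil{\phi}^2=\int_\Omega|\nabla\phi|^2$. Both sides are nonnegative; moreover~$\int_\Omega V_\lambda\oscil{\phi}^2=0$ would force~$\oscil{\phi}\equiv0$ (as~$V_\lambda>0$), i.e.~$\phi\equiv\average{\phi}$ a constant, which together with~$\phi\in H^1_0(\Omega)\setminus\{0\}$ is impossible. Hence both integrals are strictly positive and~$\lambda+\sigma>0$.

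With~$\lambda+\sigma>0$ in hand, the sign computation above yields~$-\Delta\phi<0$, i.e.\ $\phi$ is strictly subharmonic in~$\Omega_1$, and the argument concludes via Hopf's lemma as indicated. The only remaining point to be careful about is regularity: I would invoke interior elliptic regularity for \eqref{eq:eigenfunction} (legitimate since~$V_\lambda\in C^1$) to ensure~$\phi\in C^2$ near~$x_0$ and that the one-sided normal derivative there makes sense, so that Hopf's lemma applies on the interior sphere. I expect this regularity bookkeeping, rather than the subharmonicity itself, to be the only mildly delicate step; the genuine conceptual content is the identity~$(\lambda+\sigma)\int_\Omega V_\lambda\oscil{\phi}^2=\int_\Omega|\nabla\phi|^2$, which turns the nonlocal constant~$\average{\phi}$ into a harmless shift and pins down the sign of the Laplacian.
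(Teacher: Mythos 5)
Your proof is correct and follows essentially the same route as the paper: both rewrite \eqref{eq:eigenfunction} as a sign-definite differential inequality on~$\Omega_1$ (using $\lambda+\sigma\ge 0$, $V_\lambda>0$, $\phi<0$ and $\average{\phi}>0$ to get subharmonicity) and then conclude with the classical Hopf lemma at~$x_0$. The only difference is bookkeeping: where the paper simply cites the min-max characterization \eqref{eq:first eigenvalue} for $\lambda+\sigma\ge 0$, you re-derive it by testing the weak formulation with~$\phi$ and using $\int_\Omega V_\lambda\oscil{\phi}\dx=0$, which is exactly that characterization evaluated on the eigenfunction itself.
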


\begin{proof}
 Since ~$\sigma+\lambda\ge 0$ (see \rife{eq:first eigenvalue}) and~$V_\lambda>0$, then the function~$\phi$ satisfies
 \begin{align}
  \Delta \phi + (\lambda+\sigma)V_\lambda \phi
  = (\lambda+\sigma)V_\lambda \average{\phi}\ge 0.
 \end{align}
 Then the classical Hopf lemma (see e.g. \cite{GilbargTrudinger2001elliptic}) applies at~$x_0$, immediately implying the claim.

\end{proof}

The assumption that~$\average{\phi}>0$ does not harm any generality: if~$\average{\phi}<0$, we consider~$\widetilde{\phi}=-\phi$ and conclude that if~$\phi>0$ in~$\Omega_1$ then
\begin{align}
 \left.\frac{\p\phi}{\p\nu}\right|_{x_0}<0.
\end{align}
But in the case~$\average{\phi}>0$ and~$\phi|_{\Omega_1}>0$, we cannot apply the Hopf lemma, and it can happen that $\left.\frac{\p\phi}{\p \nu}\right|_{x_0}=0.$
Actually, as mentioned above, it may even happen that~$\p_\nu\phi\equiv 0$ along~$\p\Omega_1$.
Indeed, as far as $\Omega$ is a disk~$B_r\subset \R^2$, this is verified for example in a special case (which however does not fit our assumptions since $V_{\lambda}\equiv 0$ in that situation) as discussed in \cite{Bartolucci2012,BartolucciJevnikar2021global} and more in general for a non-positive eigenvalue~$\sigma\le 0$.
The latter idea goes back to~\cite{LinNi1988counterexample}, where it was shown that any "standard" eigenfunction on a disk (that is any solution of \eqref{eq:eigenfunction} on a disk with $\average{\phi}=0$) whose eigenvalue $\sigma$ is non positive, must be radial. We postpone this proof to Section~\ref{sect:radial eigenfunctions}. Indeed we have,

\begin{lemma}\label{lemma:negative eigenvalue has radial eigenfunction}
 Let~$\phi\in H^1_0(B_1)$ be an eigenfunction of a non positive eigenvalue~$\sigma\le 0$.
 Then~$\phi$ is radial and~$\phi'(1)=0$.
\end{lemma}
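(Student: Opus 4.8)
The plan is to decompose $\phi$ into angular Fourier modes, reduce each non-radial mode to a one–dimensional Sturm--Liouville equation, and compare it with the radial derivative $g:=\psi_\lambda'$ through a Picone-type identity. First I would record that, since $\Omega=B_1$ and $f$ is increasing, the solution $\psi_\lambda$ of \eqref{eq:MFE} is radially symmetric and radially decreasing by the Gidas--Ni--Nirenberg moving plane method; hence $V_\lambda=f'(\alpha_\lambda+\lambda\psi_\lambda)$ is radial and $\psi_\lambda'(r)<0$ on $(0,1]$. Writing \eqref{eq:eigenfunction} as $-\Delta\phi=(\lambda+\sigma)V_\lambda(\phi-\average{\phi})$ and expanding $\phi(r,\theta)=\sum_{k\in\bZ}\phi_k(r)e^{ik\theta}$, the radiality of $V_\lambda$ makes $\average{\phi}$ a constant depending only on the zero mode $\phi_0$, since the angular integral annihilates the modes with $k\neq0$. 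Thus for each $k\neq0$ the mode solves the local equation
\[
 -\phi_k''-\tfrac1r\phi_k'+\tfrac{k^2}{r^2}\phi_k=(\lambda+\sigma)V_\lambda\,\phi_k,\qquad \phi_k(1)=0,
\]
and, since $\phi\in H^1_0(B_1)$, $\phi_k$ is the solution regular at the origin, so $\phi_k(r)\sim c\,r^{|k|}$ as $r\to0$. To prove that $\phi$ is radial it then suffices to show $\phi_k\equiv0$ for all $k\neq0$.

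The comparison function comes from differentiating $-\Delta\psi_\lambda=f(\alpha_\lambda+\lambda\psi_\lambda)$ in $x_1$: the function $z=\partial_{x_1}\psi_\lambda=g(r)\cos\theta$, with $g=\psi_\lambda'$, solves the $k=1,\ \sigma=0$ version of the above equation, and radial monotonicity together with the Hopf lemma give $g<0$ on $(0,1]$, so $g$ never vanishes there and $g(r)\sim c'\,r$ near the origin with $c'\neq0$. In self-adjoint form the two equations read $(r\phi_k')'+q_k\phi_k=0$ and $(rg')'+q_1 g=0$, with
\[
 q_k=(\lambda+\sigma)rV_\lambda-\frac{k^2}{r},\qquad q_1=\lambda rV_\lambda-\frac1r,\qquad q_k-q_1=\sigma rV_\lambda-\frac{k^2-1}{r}\le0
\]
for $k\ge1$ and $\sigma\le0$. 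Applying Picone's identity on $[\varepsilon,1]$ with test function $\phi_k$ and the nonvanishing solution $g$, using $(rg')'+q_1g=0$, and noting that the boundary term vanishes at $r=1$ (as $\phi_k(1)=0$) and as $\varepsilon\to0$ (using $\phi_k\sim r^{|k|}$ and $g\sim c'r$), while testing the $\phi_k$-equation against $\phi_k$ gives $\int_0^1 r(\phi_k')^2\dd r=\int_0^1 q_k\phi_k^2\dd r$, one is led to
\[
 \int_0^1(q_k-q_1)\phi_k^2\dd r=\int_0^1 r\Big(\phi_k'-\tfrac{\phi_k g'}{g}\Big)^2\dd r\ge0.
\]
Since the left-hand side is $\le0$, both sides must vanish; as $q_k-q_1<0$ whenever $k\ge2$ or $\sigma<0$, this forces $\phi_k\equiv0$ in all those cases.

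The only remaining case is the borderline $k=1,\ \sigma=0$, where $q_1-q_1\equiv0$ and the comparison degenerates: vanishing of the right-hand side forces $\phi_1'g=\phi_1 g'$, i.e. $\phi_1=c\,g$, and the Dirichlet condition $\phi_1(1)=0$ together with $g(1)=\psi_\lambda'(1)\neq0$ forces $c=0$. Hence $\phi_k\equiv0$ for all $k\neq0$, so $\phi=\phi_0$ is radial. Finally, integrating \eqref{eq:eigenfunction} over $B_1$ gives $\int_{\p B_1}\p_\nu\phi\,\ds=0$, and for a radial $\phi$ the normal derivative $\p_\nu\phi\equiv\phi'(1)$ is constant on $\p B_1$, whence $\phi'(1)=0$. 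I expect the main obstacle to be the singular endpoint $r=0$—justifying the regular behaviour $\phi_k\sim r^{|k|}$ and the vanishing of the Picone boundary term there—together with the non-strict borderline case $k=1,\ \sigma=0$, which cannot be closed by the comparison alone and instead relies on the Hopf fact $g(1)\neq0$.
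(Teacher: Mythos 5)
Your proposal is correct and is essentially the paper's own argument: decompose the eigenfunction into angular modes (the nonlocal average only enters the zero mode, so every nonzero mode satisfies a local ODE), and eliminate each nonzero mode by a Sturm-type comparison with $g=\psi_\lambda'$, which solves the first-mode equation with $\sigma=0$ and has a sign; the final step deducing $\phi'(1)=0$ by integrating the equation over $B_1$ is also identical. The only differences are implementational: the paper closes the comparison by multiplying the mode equation by $r^{n-1}\psi_\lambda'$ and integrating up to the \emph{first zero} of the mode, where the strict sign of the boundary term disposes of the borderline case $\mu_k=n-1$, $\sigma=0$ in the same stroke (your whole-interval Picone identity instead treats that case separately via $g(1)\neq 0$), and the paper works in general dimension $n$ with spherical harmonics, so your $e^{ik\theta}$, $k^2/r^2$ bookkeeping should be read as the $n=2$ instance and replaced by $e_k(\theta)$, $\mu_k/r^2$ with $\mu_k\ge n-1$ and weight $r^{n-1}$.
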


In fact it is readily seen that if~$\phi$ is a radial eigenfunction, then, regardless of the sign of the eigenvalue, it satisfies~$\phi'(1)=0$.
In particular for the first eigenvalue in \cite{Bartolucci2012} and~\cite[Appendix]{BartolucciJevnikar2021global} (which is positive but $V_\lambda\equiv 0$ in that case)
there are three eigenfunctions, one of which being radial with~$\phi'(1)=0$ and the other two having two nodal domains.

This unusual phenomenon causes troubles with the theory of nodal domains. For example, on a general domain $\Omega$, a zero point of order greater or equal than two, that is, $x_0\in\ov{\om}$, $\phi(x_0)=0,\nabla \phi(x_0)=0$, need not be isolated as in classical linear problems \cite{Bers1956}. This is not surprising after
all, since, due to the non local term proportional to $\average{\phi}$, unlike standard linear growth problems \cite{Hartman-Wintner-AmJM(1953)}, near any such point we have that $|\Delta \phi|$ is not anymore controlled by $(|\phi|+|\nabla \phi|)$.

In this work, motivated also by the above mentioned plasma problem and by Lemma \ref{lemma:negative eigenvalue has radial eigenfunction}, we wish to make a first step
in this direction and consider radial eigenfunctions in the unit ball~$B_1\subset\R^n$.
The nodal sets will be spheres/solid shells and we will estimate the number of nodal domains.

Before that, let us briefly recall the Courant nodal domain theorem.
A~\emph{nodal domain} is any domain~$\Omega_0\subseteq \Omega$ such that~$\phi\equiv 0$ on~$\p\Omega_0$ and either~$\phi>0$ in~$\Omega_0$ or~$\phi<0$ in~$\Omega_0$.
The Courant nodal domain theorem~\cite{Pleijel1956remarks} says that any ~$n$-th eigenfunction (counted with multiplicity) has at most~$n$ nodal domains.
For the non-local operator~$L_\lambda$, it is expected that there is a similar bound for the number of nodal domains.
However, due to the volume constraint (which leads to the non-local term in the equation), any~$n$-th eigenfunction of~$L_\lambda$ in principle could be thought of as an~$(n+1)$-th eigenfunction of an unconstrained problem.
As a consequence in this case any~$n$-th eigenfunction of~$L_\lambda$ should have at most~$(n+1)$ nodal domains.
We will prove this fact for the first radial eigenfunction.
However, as far as~$\average{\phi}=0$, obviously the equation~\eqref{eq:eigenfunction} becomes a standard linear equation, whence the argument in~\cite{Pleijel1956remarks} works and gives

\begin{theorem}[]\label{plj}
Let~$\phi_k$ be a~$k$-th eigenfunction of~$L_\lambda$ with $k\geq 1$ and assume that~$\average{\phi}=0$.
 Then~$\phi_k$ has at most~$(k+1)$ nodal domains.
\end{theorem}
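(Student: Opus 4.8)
The plan is to reduce to the classical Pleijel--Courant argument, while ``paying'' for the volume constraint with exactly one extra dimension. First I would record the variational setup. A direct computation gives the identity $\int_\Omega V_\lambda\oscil{\phi}\psi=\int_\Omega V_\lambda\oscil{\phi}\oscil{\psi}=:B(\phi,\psi)$, and shows that $B$ is a symmetric, positive definite bilinear form on $H^1_0(\Omega)$: indeed $B(\phi,\phi)=\int_\Omega V_\lambda\oscil{\phi}^2$ vanishes only when $\oscil{\phi}\equiv 0$, which by $V_\lambda>0$ means $\phi\equiv\average{\phi}$ is constant, hence $\phi\equiv 0$ in $H^1_0(\Omega)$. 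Testing \eqref{eq:eigenfunction} then recasts the eigenvalue problem as $\int_\Omega\nabla\phi\cdot\nabla\psi=(\lambda+\sigma)\,B(\phi,\psi)$. Since the embedding of $H^1_0(\Omega)$ into the weighted space with bounded weight $V_\lambda$ is compact, this problem has a discrete spectrum with the usual min--max characterization; writing $\nu_j:=\lambda+\sigma_j$ for the ordered eigenvalues and $\phi_1,\phi_2,\dots$ for a $B$-orthonormal system of eigenfunctions, I have the Courant--Fischer identity $\nu_k=\min\{R_L(\phi):\phi\in H^1_0(\Omega)\setminus\{0\},\ B(\phi,\phi_j)=0,\ j=1,\dots,k-1\}$, with Rayleigh quotient $R_L(\phi)=\int_\Omega|\nabla\phi|^2\big/B(\phi,\phi)$.

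The key elementary remark is that on the codimension-one subspace $W=\{\phi\in H^1_0(\Omega):\average{\phi}=0\}$ one has $\oscil{\phi}=\phi$, so $R_L$ reduces there to the standard weighted Rayleigh quotient. Our hypothesis $\average{\phi_k}=0$ means precisely $\phi_k\in W$, whence $\phi_k$ solves the genuinely local equation $-\Delta\phi_k=\nu_k V_\lambda\phi_k$. Consequently, if $D_1,\dots,D_r$ are the nodal domains of $\phi_k$ and $w_i=\phi_k\,\mathbf{1}_{D_i}$, then integrating this local equation against $\phi_k$ over each $D_i$ yields $\int_{D_i}|\nabla\phi_k|^2=\nu_k\int_{D_i}V_\lambda\phi_k^2$, and therefore every combination $w=\sum_i c_i w_i$ that happens to lie in $W$ satisfies $R_L(w)=\nu_k$ exactly.

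Now I would argue by contradiction, assuming $r\ge k+2$. Using the first $k+2$ nodal domains I form the $(k+2)$-dimensional space $S=\Span(w_1,\dots,w_{k+2})$ and impose on $v=\sum_i c_i w_i\in S$ the $k+1$ linear conditions $\average{v}=0$ (one condition), $B(v,\phi_j)=0$ for $j=1,\dots,k-1$ ($k-1$ conditions), and $c_{k+2}=0$ (one condition). Since $\dim S=k+2$, a nonzero $v$ exists. It lies in $S\cap W$, so $R_L(v)=\nu_k$ by the previous step, while the orthogonality relations together with Courant--Fischer force $R_L(v)\ge\nu_k$; hence $v$ realizes the constrained minimum and is an eigenfunction of $L_\lambda$ for $\nu_k$. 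As $v\in W$, it solves $-\Delta v=\nu_k V_\lambda v$, but $c_{k+2}=0$ makes $v$ vanish on the open nodal domain $D_{k+2}$, and strong unique continuation (valid since $V_\lambda\in C^1(\overline{\Omega})$ is bounded) gives $v\equiv 0$, a contradiction. Therefore $r\le k+1$.

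I expect the only delicate point to be the bookkeeping that produces $k+1$ rather than $k$: the single extra nodal domain allowed, compared with the classical theorem, is exactly the one dimension spent enforcing $\average{v}=0$, i.e.\ landing the test function in $W$, where the nonlocal term disappears. The compactness and min--max preliminaries are routine, and the unique continuation step is standard for Schr\"odinger-type operators with bounded potential. The crucial role of the hypothesis $\average{\phi_k}=0$ is that it annihilates the nonlocal term both in the equation satisfied by $\phi_k$ and in the Rayleigh quotient restricted to $W$, which is precisely what makes the nodal-domain restrictions $w_i$ admissible competitors in the variational scheme.
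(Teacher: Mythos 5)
Your proof is correct and follows essentially the same route as the paper: the observation that $\average{\phi_k}=0$ makes the equation local, combined with the Courant--Pleijel variational argument in which one extra test-function dimension is spent on the constraint $\average{v}=0$ --- exactly the mechanism the paper invokes (by reference to Pleijel, and by its heuristic that a constrained $k$-th eigenfunction behaves like an unconstrained $(k+1)$-th one) to get the bound $k+1$ instead of $k$. Since the paper's proof is a one-line reduction plus citation, your write-up simply supplies the details it omits: the nodal-restriction identity, the count of $k+2$ dimensions against $k+1$ linear constraints, equality in the min--max forcing $v$ to be an eigenfunction, and unique continuation for the contradiction.
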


The nontrivial case is when~$\average{\phi}\neq 0$.
Consider the unit ball~$B_1\subset \R^n$ and let~$\phi=\phi(r)$ be a radial eigenfunction:
\begin{align}\label{eq:eigenfunction-radial}
 \phi''(r)+\frac{n-1}{r}\phi'(r)+\lambda V_\lambda \oscil{\phi}
 = -\sigma V_\lambda\oscil{\phi},\qquad r\in [0,1]
\end{align}
with~$\average{\phi}>0$.
Due to the above observations it may happen that~$\phi\ge 0$ in~$[r_1, r_3]$ and~$\phi(r_2)=0$,~$\phi'(r_2)=0$ for some~$r_2\in [r_1, r_3]$.
Then~\eqref{eq:eigenfunction-radial}  implies that
\begin{align}
 \phi''(r_2)=(\lambda+\sigma)V_\lambda(r_2)\average{\phi}>0.
\end{align}
Remark that since we assume $\al>0$ and $f'>0$ in $(0,+\ii)$, then $V_{\lambda}$ is a strictly positive even if $r_2=1$. In particular, any such point is necessarily isolated.
This fact motivates the following definitions.
\begin{dfn}\label{dfn:singular point-I}
 Let~$\phi$ be a radial eigenfunction of~\eqref{eq:eigenfunction-radial} with~$\average{\phi}>0$ in~$B_1$.
 A \emph{singular point} of~$\phi$ is a point~$r_0\in [0,1]$ such that
 \begin{align}
  \phi(r_0)=0, & &
  \phi'(r_0)=0, & &
  \phi''(r_0)>0.
 \end{align}

\end{dfn}

\begin{dfn}
 Let~$\phi$ be a radial eigenfunction of~$L_\lambda$ in~$B_1$ with~$\average{\phi}>0$.
 A \emph{generalized nodal domain} of~$\phi$ is a radial domain~$\Omega_0$ with the following properties:
 \begin{itemize}
  \item $\phi\equiv 0$ on~$\p\Omega_0$, and if $r\in [0,1)$ then $\p_r\phi \neq 0$ on~$\p \Omega_0$,
  \item in~$\Omega_0$, either~$\phi\ge 0$ or~$\phi\le 0$,
  \item if~$\phi\le 0$ in~$\Omega_0$ then~$\phi<0$ in~$\Omega_0$,
  \item if~$\phi\ge 0$ in~$\Omega_0$, then~$\phi>0$ in~$\Omega_0$ possibly with the exception of a finite number of spheres $\braces{x\in B_1 \mid |x|=r_i}_{i=1,2,\cdots, n}$ such that each~$r_i$ is a singular point of~$\phi(r)$, $1\le i\le n$.
 \end{itemize}

\end{dfn}

\begin{rmk}
 If a generalized domain is a ball~$B_r(0)$, then in polar coordinates we may identify~$B_r(0)$ with the interval~$[0,r)$, being understood that in this particular case the condition~$\phi=0$ on~$\p B_r(0)$ takes the form~$\phi(r)=0$.
\end{rmk}

Note also that, according to the above definition, the nodal sets, as the boundaries of the generalized nodal domains, are the preimages of some regular values.
Thus they are all \emph{nodal spheres/solid shells}. We can prove the following
\begin{thm}\label{thm:radial nodal domains-first}
 Let~$\phi_1$ be a radial first eigenfunction on~$B_1$.
 Then~$\phi_1$ has at most two generalized nodal domains.
\end{thm}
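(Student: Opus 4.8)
The plan is to remove the nonlocal term by a constant shift, turning the radial equation \eqref{eq:eigenfunction-radial} into a genuinely local, homogeneous problem whose sign structure can be read off from oscillation theory together with the boundary identity $\phi_1'(1)=0$. Concretely, I would normalize $\average{\phi_1}=a>0$ and set $u:=\oscil{\phi_1}=\phi_1-a$; since the shift is by a constant, $\oscil{\phi_1}=u$ and \eqref{eq:eigenfunction-radial} becomes
\[
 \parenthesis{r^{n-1}u'}'+\mu\,r^{n-1}V_\lambda\,u=0,\qquad \mu:=\lambda+\sigma_{1,\lambda}>0,
\]
on $(0,1)$, supplemented by $u'(0)=0$ (radial regularity at the centre), $u(1)=-a$ and $u'(1)=\phi_1'(1)=0$, the last being the general fact recalled right after Lemma \ref{lemma:negative eigenvalue has radial eigenfunction}. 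Integrating \eqref{eq:eigenfunction-radial} once more yields $\int_0^1 r^{n-1}V_\lambda u\dd r=0$, so $u$ is forced to change sign at least once.

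Next I would read $u$ as an eigenfunction of the \emph{unconstrained} self-adjoint problem on $\hat{H}:=\braces{v\in H^1(B_1) : v|_{\p B_1}\equiv\mathrm{const}}$ given weakly by $\int_{B_1}\nabla v\cdot\nabla w=\mu\int_{B_1}V_\lambda v\,w$. Constants solve this with eigenvalue $0$, and a direct check --- using $\int_{\p B_1}\p_\nu\phi_1=0$ --- shows that $\phi_1\mapsto u$ is an eigenvalue-preserving correspondence between the eigenfunctions of $L_\lambda$ and the positive-eigenvalue eigenfunctions of this problem (which are exactly those with $\int_{B_1}V_\lambda v=0$). Thus the \emph{first} eigenfunction $\phi_1$ of $L_\lambda$ corresponds to the \emph{second} eigenfunction $u$ of the unconstrained problem, which is the precise content of the heuristic that an $n$-th constrained eigenfunction behaves like an $(n{+}1)$-th unconstrained one. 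In the radial class this is a Sturm--Liouville problem with Neumann conditions, so its second eigenfunction $u$ has exactly one interior zero $z\in(0,1)$; since $u(1)=-a<0$, this forces $u>0$ on $[0,z)$ and $u<0$ on $(z,1]$.

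Finally I would translate this back to $\phi_1=u+a$, whose genuine sign changes occur exactly at the crossings of the level $-a$ by $u$. On $(z,1)$ we have $u<0$, hence $\parenthesis{r^{n-1}u'}'=-\mu r^{n-1}V_\lambda u>0$; since $r^{n-1}u'$ vanishes at $r=1$, it must be negative on $(z,1)$, so $u$ is strictly decreasing there and stays strictly above its endpoint value $-a$. Combined with $u>0>-a$ on $[0,z)$, this gives $\phi_1=u+a>0$ on $[0,1)$ with $\phi_1(1)=0$, so $\phi_1$ has a single generalized nodal domain, in particular at most two. In the borderline situation where $u$ attains $-a$ at an interior minimum, that point is precisely a singular point in the sense of Definition \ref{dfn:singular point-I} and does not open a new nodal domain.

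I expect the real work to lie in this last step rather than in the reformulation: oscillation theory controls the nodal set $\braces{u=0}$ of the auxiliary function, whereas the generalized nodal domains of $\phi_1$ are governed by the shifted level set $\braces{u=-a}$. Bounding the number of crossings of $-a$ is exactly what forces the use of the boundary identity $\phi_1'(1)=0$ and of the sign of $\parenthesis{r^{n-1}u'}'$ on the region where $u<0$, and it is also the place where the singular-point notion of Definition \ref{dfn:singular point-I} is needed, in order to absorb the degenerate case in which $u$ becomes tangent to the level $-a$.
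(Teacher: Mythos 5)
Your proposal is correct in substance, but it takes a genuinely different route from the paper, which argues by contradiction in the style of Pleijel: assuming $N\ge 3$ generalized nodal domains, the paper tests the variational inequality \eqref{eq:lower bound by sigma1} with combinations $\sum_j a_j\phi_1\chi_{\Omega_j}$, reduces it to the claim that an explicit $N\times N$ matrix built from the weighted averages $m_j=\average{\phi_1\chi_{\Omega_j}}$ has no negative eigenvalue, and refutes that for $N\ge3$ via the matrix determinant lemma and the Sylvester criterion. You instead make rigorous the heuristic stated in the paper's introduction (an $n$-th constrained eigenfunction is an $(n{+}1)$-th unconstrained one): the shift $u=\phi_1-\average{\phi_1}$ removes the nonlocal term, the identity $\phi_1'(1)=0$ makes $u$ a Neumann-type eigenfunction, and the minimality of $\sigma_1$ (every eigenvalue of $L_\lambda$ equals the Rayleigh quotient of its own eigenfunction, hence is $\ge\sigma_1$, and every radial Neumann eigenfunction $v$ with positive eigenvalue $\mu'$ gives back the $L_\lambda$-eigenfunction $v-v(1)$ with eigenvalue $\mu'-\lambda$) forces $\lambda+\sigma_1$ to be the \emph{smallest positive} radial Neumann eigenvalue; oscillation theory then gives the single zero $z$, and your monotonicity argument on $(z,1)$ concludes. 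Three comments. First, your conclusion is strictly stronger than the theorem: since $r^{n-1}u'$ is strictly increasing on $(z,1)$ and vanishes at $r=1$, you get $u>-a$ strictly there, so $\phi_1>0$ on $[0,1)$ and the radial first eigenfunction has exactly \emph{one} generalized nodal domain; the borderline tangency you hedge against in your last paragraph cannot occur, and Definition \ref{dfn:singular point-I} is never needed in the interior. This does not contradict the paper: its sharpness remarks for $k=1$ only say the matrix obstruction disappears when $N=2$, not that a radial first eigenfunction with two generalized nodal domains exists. Second, the two steps you should justify with care are the inverse spectral correspondence just mentioned (without it, $u$ could a priori be a \emph{higher} radial Neumann eigenfunction) and the Sturm--Courant oscillation theorem for the radial problem, which is a \emph{singular} Sturm--Liouville problem (the coefficient $r^{n-1}$ degenerates at $r=0$) with a Neumann condition at $r=1$; the Courant argument does go through, but it is itself a nodal-domain theorem and should be proved or cited. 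Third, what the paper's heavier machinery buys is extendibility: the matrix formulation generalizes, via the interlacing theorem, to $k$-th radial eigenfunctions in Section \ref{sect:proof of general case}, whereas your anchor $u'(1)=0$ controls only the outermost nodal interval of $u$, and on interior intervals where $u<0$ the number of crossings of the level $-\average{\phi_1}$ is no longer controlled by this monotonicity argument alone.
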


The proof is technically nontrivial, see Section~\ref{sect:proof of first thm}.
We refine the argument of~\cite{Pleijel1956remarks} and reduce the problem to that of finding at least one negative eigenvalue of a suitably defined matrix.
To achieve this goal we combine the~\emph{matrix determinant lemma} and the Sylvester criterion.
We also show the sharpness of the above result by some example.

For a general~$k$-th radial eigenvalue, the above argument cannot work directly.
Instead, we appeal to the \emph{Interlacing Theorem} for symmetric matrices, and get  finer information on the negative inertia index of certain coefficient matrices, which results in the following
 \begin{thm}\label{thm:radial nodal domains-general}
   Any~$k$-th radial function has at most~$2k$ generalized nodal domains.
 \end{thm}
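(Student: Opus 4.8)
The plan is to adapt the Courant--Fischer variational argument to the constrained (nonlocal) setting, reducing the count to an inertia computation for one symmetric $N\times N$ matrix, where $N$ is the number of generalized nodal domains of the given $k$-th radial eigenfunction $\phi$. One may assume $\average{\phi}>0$: the case $\average{\phi}=0$ is the purely local one already settled by Theorem~\ref{plj} (which even gives $k+1\le 2k$), and $\average{\phi}<0$ reduces to $\average{\phi}>0$ by replacing $\phi$ with $-\phi$. Testing \eqref{eq:eigenfunction} with $\phi$ yields the Rayleigh characterization $\sigma+\lambda=\int_{B_1}|\nabla\phi|^2\big/\int_{B_1}V_\lambda\oscil{\phi}^2$, whose denominator is a genuine positive definite quadratic form $b$ precisely because $\int_{B_1}V_\lambda\oscil{\phi}=0$. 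Writing $D_1,\dots,D_N$ for the generalized nodal domains (balls/solid shells, of strictly alternating sign, since at each interior boundary point $\phi=0$ while $\p_r\phi\neq0$) and $u_j=\phi\,\chi_{D_j}\in H^1_0(B_1)$, these $N$ functions are linearly independent and span a space $W$ on which $b$ and the Dirichlet form $a(\phi,\phi)=\int|\nabla\phi|^2$ are represented by symmetric matrices.

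A boundary observation sharpens the naive count. Integrating \eqref{eq:eigenfunction-radial} over $B_1$ and using radial symmetry forces $\phi'(1)=0$. Hence, were the outermost domain $D_N$ negative, Lemma~\ref{lem1.1} would give $\p_\nu\phi(x_0)=\phi'(1)>0$ at $x_0\in\p B_1$, a contradiction; thus $D_N$ is positive. Combined with the strict sign alternation this gives $N\le 2n_+$, where $n_+$ is the number of positive generalized nodal domains, and it remains to prove $n_+\le k$.

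Next I compute the two matrices. Set $\mu=\sigma+\lambda>0$, $M=\int_{B_1}V_\lambda$, $p_j=\int_{D_j}V_\lambda\phi$, $P=\diag(p_j)$, and $B=\diag\!\big(\int_{D_j}V_\lambda\phi^2\big)$. Since $\nabla u_i,\nabla u_j$ have disjoint supports, $a$ is diagonal; integrating \eqref{eq:eigenfunction-radial} against $\phi$ over each $D_j$ (all boundary terms vanish because $\phi=0$ on $\p D_j$, $\phi'(1)=0$, and the weight $r^{n-1}\phi\phi'$ vanishes at $r=0$) gives $\int_{D_j}|\nabla\phi|^2=\mu\,(B_{jj}-\average{\phi}\,p_j)$, i.e. $A=\mu(B-\average{\phi}P)$. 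Expanding $b$ and using $\int V_\lambda\oscil{u}=0$ gives the $b$-matrix $\widetilde B=B-\tfrac1M pp^{\mathsf T}$, a rank-one update of $B$. Therefore
\begin{align}
 A-\mu\widetilde B=\mu\,Q,\qquad Q=\tfrac1M\,pp^{\mathsf T}-\average{\phi}\,P .
\end{align}
As $p_j>0$ exactly on the $n_+$ positive domains, the diagonal matrix $-\average{\phi}P$ has exactly $n_+$ negative entries while $\tfrac1M pp^{\mathsf T}\succeq0$ has rank one; the Interlacing Theorem for this rank-one perturbation then yields $i_-(Q)\ge n_+-1$, where $i_-$ denotes the negative inertia index.

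I close the loop through min--max. Since $\widetilde B\succ0$ (the form $b$ is positive definite and the $u_j$ independent), the generalized eigenvalues $\nu_1\le\cdots\le\nu_N$ of the pencil $(A,\widetilde B)$ are the Ritz values of the Rayleigh quotient on $W$, hence overestimate the radial eigenvalues: $\nu_i\ge\mu_i:=\sigma_{i,\lambda}+\lambda$. Because $\phi=\sum_j u_j$ is the $k$-th eigenfunction, the all-ones vector satisfies $A\mathbf 1=\mu\widetilde B\mathbf 1$, so $\mu=\mu_k$ is one of the $\nu_i$; and by Sylvester's law of inertia $i_-(Q)=i_-(A-\mu_k\widetilde B)=\#\{i:\nu_i<\mu_k\}$. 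Each such index obeys $\mu_i\le\nu_i<\mu_k$, so $i\le k-1$, giving $i_-(Q)\le k-1$. With the interlacing bound this forces $n_+-1\le k-1$, i.e. $n_+\le k$, whence $N\le 2n_+\le 2k$. The main obstacle is the inertia bookkeeping: one must verify that the coupling through $\average{\phi}$ produces exactly a rank-one correction (so interlacing gives $i_-(Q)\ge n_+-1$ and not something weaker) and that the Ritz/inertia identification is exact, which in turn requires $\widetilde B\succ0$ and the vanishing of every boundary term in the cell-wise energy identity. The radial Hopf step is what removes the spurious extra domain, improving the bare matrix bound $N\le 2k+1$ to the sharp $N\le 2k$; without it the alternating pattern beginning and ending with a negative domain would survive the matrix argument.
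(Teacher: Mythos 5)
Your proposal is correct and takes essentially the same route as the paper's proof: the same decomposition into generalized nodal domains with test functions $\phi\chi_{D_j}$, the same diagonal-plus-rank-one structure (your $Q$ is just $\int_{B_1}V_\lambda$ times the paper's matrix $A$), the same Interlacing Theorem lower bound $i_-(Q)\ge n_+-1$, and the same min--max upper bound $i_-(Q)\le k-1$, giving $N\le 2n_+\le 2k$. Your pencil/Ritz-value formulation with Sylvester's law of inertia is merely a more explicit rendering of the paper's min--max step, and the Hopf-lemma argument that the outermost domain is nonnegative is the same one the paper imports from its Section 3.
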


 This is a generalization of Theorem~\ref{thm:radial nodal domains-first}.
 We remark that even if the proof of Theorem~\ref{thm:radial nodal domains-general} is elegant and self-contained, nevertheless we include the proof of Theorem~\ref{thm:radial nodal domains-first} since it uses a different strategy which is quite enlightening and shows the power of the Pleijel's original argument.
 Furthermore, there are two points to be clarified.
 First of all we are only enumerating the radial eigenfunctions, the non-radial ones are not included.
 The nonradial ones could help to fill the gap between~$(k+1)$ and~$2k$, although we don't have a precise argument at hand.
 Moreover, there is still a chance that, compared to the classical Courant Nodal Domain Theorem,
 the result is not sharp. Further comments about this point will be given at the end of Section~\ref{sect:proof of general case}.

In the classical case, it is well known that the maximum allowed number of nodal domains of eigenfunctions has relevant consequences about the multiplicity of the corresponding eigenvalues, see~\cite{Nadirashvili1987multiple}.
Things are different for \eqref{eq:eigenfunction-radial} in the radial case. Indeed we have,
\begin{prop}\label{prop:uniqueness}
 Any eigenvalue~$\sigma$ of~$L_\lambda$ in~$B_1$ has at most one radial eigenfunction.
\end{prop}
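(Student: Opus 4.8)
The plan is to interpret the proposition in the natural way — that any two radial eigenfunctions associated with the same eigenvalue $\sigma$ must be proportional — and to exploit the fact that, once the non-local term is frozen, \eqref{eq:eigenfunction-radial} becomes an \emph{inhomogeneous linear} second order ODE. Let $\phi_1,\phi_2$ be radial eigenfunctions of the same $\sigma$, and set $c_i=\average{\phi_i}$. Since each $\phi_i$ is radial we have $\phi_i(1)=0$ and, as already recorded above, $\phi_i'(1)=0$. Writing $\mu=\lambda+\sigma$, equation \eqref{eq:eigenfunction-radial} reads
\[
 \phi_i''+\frac{n-1}{r}\phi_i'+\mu V_\lambda\,\phi_i=\mu V_\lambda\, c_i ,\qquad r\in(0,1],
\]
so that each $\phi_i$ solves the \emph{same} local operator $\mathcal{L}u:=u''+\tfrac{n-1}{r}u'+\mu V_\lambda u$ against a source that is simply a scalar multiple of $\mu V_\lambda$.

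The key step is to cancel the non-local sources by a suitable linear combination. First I would observe that $c_i\neq 0$: if $\average{\phi_i}=0$, then $\mathcal{L}\phi_i=0$ with the zero Cauchy data $\phi_i(1)=\phi_i'(1)=0$, whence $\phi_i\equiv 0$ by uniqueness for the linear ODE, contradicting $\phi_i$ being an eigenfunction (this is exactly the vanishing phenomenon recalled in the Introduction via the Hopf lemma, here read off directly on the radial profile). With $c_1,c_2\neq 0$ at hand, I would set
\[
 \psi:=c_2\phi_1-c_1\phi_2 ,
\]
so that the source terms cancel and
\[
 \psi''+\frac{n-1}{r}\psi'+\mu V_\lambda\,\psi=\mu V_\lambda\bigl(c_2 c_1-c_1 c_2\bigr)=0 .
\]
Moreover $\psi(1)=c_2\phi_1(1)-c_1\phi_2(1)=0$ and $\psi'(1)=c_2\phi_1'(1)-c_1\phi_2'(1)=0$. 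Thus $\psi$ solves the homogeneous linear equation $\mathcal{L}\psi=0$ with vanishing Cauchy data at $r=1$, and uniqueness for the initial value problem forces $\psi\equiv 0$. Therefore $c_2\phi_1=c_1\phi_2$, i.e. $\phi_1$ and $\phi_2$ are proportional, which is the assertion.

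The only delicate point, and the one I expect to require a careful word, is the uniqueness of solutions of the second order linear ODE down to the regular singular point $r=0$. Since the singularity of the coefficient $\tfrac{n-1}{r}$ sits only at the origin while $V_\lambda\in C^1$, on every interval $[\delta,1]$ with $\delta>0$ the coefficients are continuous and the standard uniqueness theorem applies to the backward Cauchy problem starting from $r=1$; letting $\delta\to 0$ gives $\psi\equiv 0$ on $(0,1]$, and continuity of $\psi$ at the origin completes the argument without ever needing to pass \emph{through} the singularity. I would also make explicit the two structural facts used throughout, namely $\phi_i'(1)=0$ for radial eigenfunctions and $\average{\phi_i}\neq 0$, and contrast the conclusion with the classical local theory, where an eigenvalue may well carry several independent radial eigenfunctions; here the non-local constraint, far from enlarging multiplicity, pins the radial eigenspace down to dimension one.
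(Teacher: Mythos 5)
Your proof is correct, and it shares the overall skeleton of the paper's argument in Section~\ref{sect:uniqueness}: take two radial eigenfunctions, form a linear combination that solves the \emph{homogeneous} local ODE with zero Cauchy data at $r=1$, and conclude by uniqueness for the backward Cauchy problem. The difference lies in how the combination is chosen, and it has some content. The paper works through the boundary second derivative: since $V_\lambda(1)>0$, every nontrivial radial eigenfunction satisfies $\phi''(1)=(\lambda+\sigma)V_\lambda(1)\average{\phi}\neq 0$, so one can pick $\Phi=\alpha\phi_1+\beta\phi_2$ with $\Phi''(1)=0$, then show $\average{\Phi}=0$ and hence $\Phi\equiv 0$, contradicting independence. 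You instead cancel the non-local source directly with $\psi=c_2\phi_1-c_1\phi_2$, which by linearity of the average satisfies $\average{\psi}=0$; since $\phi_i''(1)=(\lambda+\sigma)V_\lambda(1)c_i$, your $\psi$ is in fact proportional to the paper's $\Phi$, so the two proofs isolate the same function. What your formulation buys is that it never uses the positivity of $V_\lambda$ at the boundary: the only inputs are $\phi_i(1)=\phi_i'(1)=0$, linearity of $\average{\cdot}$, and ODE uniqueness on $[\delta,1]$ for every $\delta>0$, which holds as soon as $V_\lambda$ is continuous near $r=1$. Consequently your argument extends verbatim to the degenerate case $V_\lambda|_{\p B_1}=0$ of Section~\ref{sec6}, where the paper has to invoke the $\beta$-vanishing analysis of Theorem~\ref{thmsec7} to restore its $\phi''(1)\neq 0$ step; your handling of the singular point $r=0$ (uniqueness on $[\delta,1]$, then $\delta\to 0^+$) is also exactly what makes the paper's brief ``hence $\phi\equiv 0$'' rigorous for radial profiles. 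One side remark in your closing sentence is inaccurate, though it does not affect the proof: for the classical Dirichlet Laplacian on a ball the radial eigenspace of a given eigenvalue is likewise at most one-dimensional, by Sturm--Liouville/Bessel theory, so simplicity of radial eigenspaces is not a purely non-local phenomenon.
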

Needless to say that there may be no radial eigenfunctions for a positive eigenvalue.
But if there is one, the above proposition claims that this is the only one.
The proof is given in Section~\ref{sect:uniqueness}.

Let us remark that, besides Theorem A, by a result in \cite{Damascelli1999631} we see that there is no eigenfunction $\phi$ of \eqref{eq:eigenfunction-radial} with $\average{\phi}=0$. Therefore we deduce from Proposition \ref{prop:uniqueness} and Theorem \ref{thm:radial nodal domains-first} that, the unique  eigenfunction of \eqref{eq:eigenfunction-radial} of a non positive first eigenvalue admits, as discussed above, at most two (generalized) nodal domains.

\

Several questions remain open. For example in many applications one would need some sort of generalized nodal domain theorem, limiting the number of generalized nodal domains for the eigenfunctions of~\eqref{eq:eigenfunction}, both for radial higher eigenfunctions with a sharper bound as well as for any such eigenfunction (not
necessarily radially symmetric) in general domains. Then one would like to understand also the
multiplicity (\cite{Nadirashvili1987multiple}) for problems of this sort.

In conclusion we observe that the assumptions $f'>0$ in $(0,+\infty)$ and $\al>0$ are still too restrictive
to cover the problem arising in plasma physics (\cite{BJ1,BJ2}) which was indeed part of our initial motivation.
To achieve this goal we need a refined version of Lemma~\ref{lemma:negative eigenvalue has radial eigenfunction}, Theorem~\ref{thm:radial nodal domains-first} and Proposition~\ref{prop:uniqueness} under an additional
technical assumption about $V_\lambda$ (see \eqref{eq:model assumption} below) in case $\al=0$. To simplify the exposition we postpone the discussion concerning this technical point to Section~\ref{sec6}.

\

The paper is organized as follows. In section \ref{sect:basic} we collect some preliminary spectral properties, then, we prove the main nodal domain theorem and the related multiplicity of eigenvalues in sections \ref{sect:proof of first thm} and \ref{sect:uniqueness}, respectively. The radial eigenfunctions with non positive eigenvalues are discussed in \ref{sect:radial eigenfunctions}. The last section \ref{sec6} is devoted to a degenerate case arising in the plasma problem.

\

\noindent {\bf Acknowledgments.}
We would like to express our warmest thanks to Prof. Carmine Di Fiore for very interesting discussions and in particular for pointing out to us the relevance of the Courant-Fischer Interlacing Theorem.

\

\section{Basic spectral properties} \label{sect:basic}

The first eigenvalues of~$L_\lambda$ can be characterized by the min-max principle:
\begin{align}\label{eq:first eigenvalue}
 \sigma_1=\sigma_1(\alpha_\lambda,\psi_\lambda)
 = \min_{\phi\in H^1_0(\Omega)}
 \frac{ \int_\Omega |\nabla\phi|^2-\lambda\int_\Omega V_\lambda \oscil{\phi}^2  }{\int_\Omega V_\lambda \oscil{\phi}^2},
\end{align}
and for~$k\ge 2$, the~$k$-th eigenvalues are defined inductively by
\begin{align}\label{eq:general eigenvalue}
 \sigma_k=\sigma_k(\alpha_\lambda,\psi_\lambda)
 = \min_{\phi\in H^1_0(\Omega), \; \average{\phi\phi_j}=0, \forall 1\le j\le k-1 }
 \frac{ \int_\Omega |\nabla\phi|^2-\lambda\int_\Omega V_\lambda \oscil{\phi}^2  }{\int_\Omega V_\lambda \oscil{\phi}^2},
\end{align}
where~$\phi_j$ (counted with multiplicity) is any eigenfunction of the~$j$-th eigenvalue~$\sigma_j$, for~$j=1,\cdots, k-1$.
In particular,~$\lambda+\sigma>0$ for any eigenvalue~$\sigma$ and for any~$\phi\in H^1_0(\Omega)$,
\begin{align}\label{eq:lower bound by sigma1}
 \int_\Omega |\nabla\phi|^2 -\lambda\int_\Omega V_\lambda \oscil{\phi}^2 \ge \sigma_1 \int_\Omega V_\lambda \oscil{\phi}^2
\end{align}
and the equality is attained for the eigenfunctions of~$\sigma_1$.

This should be compared with the first Dirichlet type eigenvalues, by which we mean
\begin{align}
 \nu_1
 = \nu_1(\alpha_\lambda,\psi_\lambda)
 =\min_{\phi\in H^1_0(\Omega)}
 \frac{ \int_\Omega |\nabla\phi|^2-\lambda\int_\Omega V_\lambda {\phi}^2  }{\int_\Omega V_\lambda {\phi}^2},
\end{align}
which is evaluated without taking off the average~$\average{\phi}$.
Indeed, since~$V_\lambda>0$ in~$\Omega$, we have
\begin{align}
 \sigma_1+ \lambda
 = \min_{\phi\in H^1_0(\Omega)\setminus\braces{0}}
 \frac{ \int_\Omega |\nabla\phi|^2 }{\int_\Omega V_\lambda \oscil{\phi}^2}
 \ge \min_{\phi\in H^1_0(\Omega)\setminus\braces{0}}
 \frac{ \int_\Omega |\nabla\phi|^2 }{\int_\Omega V_\lambda {\phi}^2}
 =\nu_1+\lambda
\end{align}
where we used that, for any~$\phi\in H^1_0(\Omega)\setminus\braces{0}$, $\int_{\Omega} V_\lambda \oscil{\phi}\dx =0$ and consequently
\begin{align}
 \int_{\Omega} V_\lambda \phi^2\dx
 =&\int_\Omega V_\lambda (\average{\phi}+\oscil{\phi})^2\dx \\
 =&\int_\Omega V_\lambda \oscil{\phi}^2\dx + \average{\phi}^2 \int_\Omega V_\lambda \dx
 \ge \int_\Omega V_\lambda \oscil{\phi}^2\dx.
\end{align}
The equality above holds iff~$\average{\phi}=0$.
Concerning the eigenvalues, we readily deduce that~$\sigma_1 \ge \nu_1$, although the equality cannot hold, since the first Dirichlet eigenfunction has a fixed sign in~$\Omega$, whence it cannot have zero mean with respect to~$V_\lambda$.
Therefore~$\sigma_1> \nu_1$.

\bigskip

\section{A nodal domain theorem for first radial eigenfunctions}
\label{sect:proof of first thm}

In this section we carry out the proof of Theorem~\ref{thm:radial nodal domains-first}.
Here and in the sequel we assume without loss of generality that~$m_\lambda\equiv \average{\phi_1}>0$, otherwise the result is well known.\\
First of all observe that the zeros of a radial eigenfunction~$\phi(r)$, viewed as a function on the closed unit interval, are isolated, regardless of being singular or not.
Actually if there were infinitely many zeros, then these radii would admit an accumulation point $r_0\in[0,1]$ at which~$\phi^{''}(r_0)=\phi^{'}(r_0)=\phi(r_0)=0$. This impossible under our assumptions since~$V_\lambda>0$ on~$\bar{B}_1$ and we would deduce from \eqref{eq:eigenfunction-radial} that
$$
 \phi''(r_0)= (\lambda+\sigma) V_\lambda(r_0)\average{\phi}\neq 0.
$$
Therefore $\phi^{-1}(0)\subset [0,1]$ is a finite set and
there are at most finitely many generalized nodal domains,
which are concentric annuli, or more precisely, solid shells.

For any radial function solving~\eqref{eq:eigenfunction}, integrating by parts over~$\Omega\equiv B_1$ we have,
\begin{align}
 0=\int_{\p B_1} \p_\nu \phi_1\ds = 2\pi \phi'_1(1)
\end{align}
whence~$\phi'_1(1)=0$. In particular~$r=1$ is an isolated singular point, since
$\phi''(1)= (\lambda+\sigma) V_\lambda(1)\average{\phi}\neq 0.$\\

We argue by contradiction and assume that~$\phi_1$ has~$N$ generalized nodal domains, for some~$N\ge 3$.
As~$\phi'_1(1)=0$ and~$\phi_1\in H^1_0(B_1)$, we deduce from Lemma \ref{lem1.1} that~$\phi_1$ is nonnegative in the outer-most generalized nodal domain, which is denoted by~$\Omega_1$; so there is some~$r_1<1$ such that
\begin{align}
 \Omega_1 =\braces{x\in B_1\mid r_1< r < 1}.
\end{align}
Then~$\Omega_2=\braces{x\in B_1\mid  r_2<|x|<r_1}$ for some~$r_2\in (0,r_1)$, in which~$\phi_1<0$.
In this way we see that in the shells~$\Omega_{2k+1}$ the eigenfunction~$\phi_1$ is nonnegative (for~$2k+1\le N$), while in the shells~$\Omega_{2k}$ it is negative, as long as~$2k\le N$.
Note that the inner-most generalized nodal domain is a ball.

For each~$j=1,2,\cdots, N$, let
\begin{align}
 \phi_{1,j}\coloneqq \phi_1 \cdot \chi_{\Omega_j}
\end{align}
where~$\chi_{\Omega_j}$ stands for the characteristic function for~$\Omega_j$.
Then~$\phi_{1,j}\in H^1_0(\Omega)$ with weighted average
\begin{align}
 m_j\equiv \average{\phi_{1,j}}
 =\frac{\int_{\Omega_j}\phi_1  V_\lambda\dx}{\int_{\Omega} V_\lambda\dx}.
\end{align}
Then~$m_1>0$,~$m_2<0$,~$m_3>0$, etc. and in general~$(-1)^j m_j <0$. Moreover,
\begin{align}
 m_1+m_2 + \cdots + m_N =\average{\phi_1}=m_\lambda >0.
\end{align}
Consider the test function
\begin{align}
 \varphi= \sum_{j=1}^N a_j\phi_{1,j}  \in H^1_0(\Omega)
\end{align}
for some~$(a_1, a_2,\cdots, a_N)\in \R^N$ to be fixed later on.
The weighted average of~$\varphi$ is
\begin{align}
 \average{\varphi}=\sum_{j=1}^N a_j m_j.
\end{align}

\

By integrating by parts we find that,
\begin{align}
 &\frac{1}{\lambda+\sigma_1}\int_\Omega |\nabla\varphi|^2\dx \\
 =& \frac{1}{\lambda+\sigma_1} \int_{\Omega}\varphi(-\Delta\varphi)\dx
   =\sum_{j=1}^3\frac{1}{\lambda+\sigma_1}\int_{\Omega_j} \varphi(-\Delta\varphi)\dx  \\
 =& \sum_{j=1}^N a_j \int_{\Omega_j} \varphi V_\lambda \oscil{\phi_1}\dx \\
 =& \sum_{j=1}^N a_j \int_{\Omega_j} \varphi V_\lambda \parenthesis{\phi_1 - \average{\phi_1}} \dx  \\
 =& \sum_{j=1}^N \int_{\Omega} \varphi V_\lambda a_j\phi_{1,j}\dx
   -\average{\phi_1}\sum_{j=1}^N a_j \int_{\Omega_j} V_\lambda\varphi\dx \\
 =& \int_{\Omega} \varphi V_\lambda \varphi \dx
   -\average{\phi_1} \parenthesis{\int_\Omega V_\lambda\dx}\sum_{j=1}^N m_j a_j^2  \\
 =& \int_\Omega V_\lambda \parenthesis{\varphi^2 -\average{\varphi}^2}\dx
   +\parenthesis{\int_\Omega V_\lambda\dx} \average{\varphi}^2
   -\parenthesis{\int_\Omega V_\lambda\dx} \average{\phi_1}
   \parenthesis{\sum_{j=1}^N m_j a_j^2}  \\
 =& \int_\Omega V_\lambda \oscil{\varphi}^2\dx
   +\parenthesis{\int_\Omega V_\lambda\dx}
   \braces{
     \parenthesis{\sum_{j=1}^N a_j m_j}^2
     -\parenthesis{\sum_{i=1}^N m_i} \parenthesis{\sum_{j=1}^N m_j a_j^2}
   }.
\end{align}
According to~\eqref{eq:lower bound by sigma1}, the tail term above is non-negative, namely the quadratic form
\begin{align}
 Q(\vec{a})=\sum_{i,j}m_i m_j a_i a_j - m_\lambda \sum_{j=1}^N m_j a_j^2
\end{align}
in~$\vec{a}=(a_1, \cdots, a_N)\in \R^N$ should be non-negative.
Equivalently, the symmetric matrix~$A$ corresponding to the quadratic form~$Q$, as given by,
\begin{align}
 A = -m_\lambda \begin{pmatrix}  m_1 & & & \\ & m_2 & & \\ & & \ddots &  \\ & & & m_N \end{pmatrix}
 +\begin{pmatrix}
  m_1^2 & m_1 m_2 & \ldots & m_1 m_N \\
  m_2 m_1 & m_2^2 & \ldots & m_2 m_N \\
  \vdots & \vdots & \ddots & \vdots \\
  m_N m_1 & m_N m_2 & \ldots& m_N^2
  \end{pmatrix}
\end{align}
doesn't have negative eigenvalues.
Note that~$A$ has a kernel given by
\begin{align}
 \Span_\R\braces{(1,1,\cdots, 1)}
\end{align}
which corresponds to~$\Span{\R\braces{\phi_1}}$: it is clear that this one-dimensional space lies in the kernel, and we
will show in a later section that this is indeed the full kernel.
Alternatively, one can prove that~$\rank(A)=N-1$ by an elementary computation.

In the rest of the proof we will show that if~$N\ge 3$ then~$A$ would have negative eigenvalues, in contradiction with~\eqref{eq:lower bound by sigma1}.
As a consequence we deduce that~$\phi_1$ cannot have more than two generalized nodal domains.

Observe at first that for~$N=2$ the matrix~$A\in \Mat(2\times 2;\R)$ has eigenvalues
\begin{align}
 t_0=0, & & t_1=-2m_1 m_2 >0,
\end{align}
while for~$N=3$ the matrix~$A\in \Mat(3\times 3; \R)$ has eigenvalues~$\braces{t_0, t_1, t_2}$ satisfying
\begin{align}
 t_0=0, & &  t_1 t_2 = 3m_1 m_2 m_3 m_\lambda <0.
\end{align}
Hence~$t_1$ and~$t_2$ are nonzero and have different signs.
In particular,~$A$ has a negative eigenvalue, a contradiction.

This actually proves that~$\phi_1$ cannot have three generalized nodal domains.
Next we use the same idea to prove the general case.
We remark that the case~$N>3$ cannot be directly reduced to the case~$N=3$ (as in~\cite{Pleijel1956remarks}), since there is a nonlocal term~$\average{\phi}$ in the equation and hence in~$A$.
The proof turns out to be more involved. In the sequel we assume that~$N\ge 4$.

Since any~$m_j$ is not zero, we can transform to the new variables
\begin{align}
 b_j=m_j a_j, \qquad j=1,\cdots, N.
\end{align}
In terms of~$b_j$'s the quadratic form~$Q$ takes the form
\begin{align}
 Q(\vec{a})
 = &\sum_{i,j=1}^N m_i m_j a_i a_j - m_\lambda \sum_{j=1}^N m_j a_j^2 \\
 =&\sum_{i,j=1}^N b_i b_j -m_\lambda \sum_{j=1}^N \frac{1}{m_j} b_j^2
\end{align}
and the matrix~$A$ transforms into
\begin{align}
 B= -m_\lambda \begin{pmatrix}  \frac{1}{m_1} & & & \\ & \frac{1}{m_2} & & \\ & & \ddots &  \\ & & & \frac{1}{m_N} \end{pmatrix}
 +\begin{pmatrix}
  1 & 1 & \ldots & 1 \\
  1 & 1 & \ldots & 1 \\
  \vdots & \vdots & \ddots & \vdots \\
 1 & 1 & \ldots& 1
 \end{pmatrix}
\end{align}
whose kernel is now given by the span of the vector~$(m_1, \cdots, m_N)$.
Moreover,~$B$ and~$A$ has the same eigenvalues.

To get the spectral properties of~$B$, one should look at its restriction onto~$\Ker(B)^\bot$.
However, in that orthogonal subspace, we didn't find an easy way to handle~$B$.
Instead we consider a complement of~$\Ker(B)$ given by~$\R^{N-1}=(0,\cdots, 0, 1)^\bot$, on which the matrix~$B$ takes the form
\begin{align}\label{mat:B restricted}
 B_{N-1}\coloneqq -m_\lambda \begin{pmatrix}  \frac{1}{m_1} & & & \\ & \frac{1}{m_2} & & \\ & & \ddots &  \\ & & & \frac{1}{m_{N-1}} \end{pmatrix}
 +\begin{pmatrix}
  1 & 1 & \ldots & 1 \\
  1 & 1 & \ldots & 1 \\
  \vdots & \vdots & \ddots & \vdots \\
 1 & 1 & \ldots& 1
 \end{pmatrix}
 \in \Mat((N-1)\times (N-1);\R).
\end{align}
Note that the second summand is a matrix of rank one.
If we write~$\vec{1}=(1,\cdots, 1)^T\in \R^{N-1}$, then
\begin{align}
 B_{N-1} = -m_\lambda \diag(\frac{1}{m_1},\cdots, \frac{1}{m_{N-1}})
 + \vec{1}\otimes \vec{1}
 \equiv H_0 + H_1,
\end{align}
where~$H_0$ denotes the diagonal part and~$H_1$ denotes the rank-one part.
By the matrix determinant lemma we have
\begin{align}
 \det(B_{N-1})
 =&\det(H_0 + H_1)  \\
 =& \parenthesis{1+\vec{1}^T (H_0)^{-1} \vec{1}} \det(H_0) \\
 =& \parenthesis{1-\sum_{i=1}^{N-1}\frac{m_i}{m_\lambda}}\cdot
 \frac{(-m_\lambda)^{N-1}}{m_1 m_2 \cdots m_{N-1}} \\
 =& (-1)^{N-1} \frac{m_\lambda^{N-2} m_N}{m_1\cdots m_{N-1}},
\end{align}
and we readily deduce in particular that~$B_{N-1}$ has no vanishing eigenvalues.

At this point we claim that~$B_{N-1}$ has at least one negative eigenvalue.
Argue by contradiction and assume that~$B_{N-1}$ is positive definite.
By the Sylvester criterion, the leading principal minors of~$B_{N-1}$ must all have positive determinant.
For~$j=1,2,\cdots, N-1$, let~$B_{N-1}^{(j)}$ denote the upper left~$(j\times j)$ corner, whose determinant is the~$j$-th leading principal minor, then again using the matrix determinant lemma:
\begin{align}\label{eq:B:N-1}
 \det B_{N-1}^{(N-1)}
 =& (-1)^{N-1} \frac{m_\lambda^{N-2}}{m_1 m_2\cdots m_{N-1}} m_N,
\end{align}
\begin{align}\label{eq:B:N-2}
 \det B_{N-1}^{(N-2)}
 =& (-1)^{N-2}\frac{m_\lambda^{N-3}}{m_1 m_2\cdots m_{N-2}} \parenthesis{m_N + m_{N-1}},
\end{align}
\begin{align}\label{eq:B:N-3}
 \det B_{N-1}^{(N-3)}
 =& (-1)^{N-3}\frac{m_\lambda^{N-4}}{m_1 m_2\cdots m_{N-3}} \parenthesis{m_N + m_{N-1}+ m_{N-2}},
\end{align}
\begin{align}\label{eq:B:N-4}
 \det B_{N-1}^{(N-4)}
 =& (-1)^{N-4}\frac{m_\lambda^{N-5}}{m_1 m_2\cdots m_{N-4}} \parenthesis{m_N + m_{N-1}+ m_{N-2}+ m_{N-3}}.
\end{align}

Since~$B_{N-1}$ was assumed to be positive definite, they should all be positive.

Recall that
\begin{align}
 m_1>0, & & m_2<0, & & \cdots\cdots  & & (-1)^N m_N <0,
\end{align}
and
\begin{align}
 m_\lambda = m_1 + m_2 + \cdots + m_N >0.
\end{align}

\

\noindent\textbf{Case 1: $N\equiv 0\pmod{4}$.}

In this case, there is an even number of negative~$m_j$'s so that
\begin{align}
 m_1 m_2 \cdots m_N >0.
\end{align}
Then by~\eqref{eq:B:N-1}
\begin{align}
 \det B_{N-1}^{(N-1)} <0
\end{align}
which is a contradiction.

\

\noindent\textbf{Case 2: $N\equiv 3\pmod{4}$.}
In this case there is an odd number of negative~$m_j$'s so that
\begin{align}
 m_1 m_2 \cdots m_N <0.
\end{align}
Hence by~\eqref{eq:B:N-1}
\begin{align}
 \det B_{N-1}^{(N-1)} <0
\end{align}
which is again a contradiction.

\

\noindent\textbf{Case 3: $N\equiv 1\pmod{4}$.}

Counting the negative signs in the sequence~$(m_j)$ we find that
\begin{align}
 m_1 m_2 \cdots m_{N-2} <0, & &
 m_1 m_2 \cdots m_{N-3} <0.
\end{align}
Since both~\eqref{eq:B:N-2} and~\eqref{eq:B:N-3} are assumed to be positive, we have
\begin{align}
 m_N+ m_{N-1}>0, & & m_N + m_{N-1} + m_{N-2} <0.
\end{align}
But the above cannot hold simultaneously since~$m_{N-2}= m_{4k-1}>0$ where~$N=4k+1$.

\

\noindent\textbf{Case 4: $N\equiv 2\pmod{4}$.}
Similarly, counting the negative signs of the~$m_j$'s we find that
\begin{align}
 m_1 m_2 \cdots m_{N-2} >0, & &
 m_1 m_2 \cdots m_{N-3} <0.
\end{align}
From~\eqref{eq:B:N-3} and~\eqref{eq:B:N-4} we would conclude
\begin{align}
 m_N+ m_{N-1} + m_{N-2}>0, & &
 m_N + m_{N-1} + m_{N-2}+ m_{N-3} <0.
\end{align}
Since for~$N=4k+2$, ~$m_{N-3}=m_{4k-1}>0$, the above two inequality cannot hold simultaneously.

To summarize, the restricted matrix~$B_{N-1}$ cannot be positive definite.
Hence the original matrix~$B$, as well as~$A$, must have negative eigenvalues.
This, as remarked, contradicts the min-max principle~\eqref{eq:lower bound by sigma1}.
Therefore, the radial first eigenfunction~$\phi_1$ cannot have more than two generalized nodal domains,
as claimed.


\

\section{A nodal domain theorem for general radial eigenfunctions}
\label{sect:proof of general case}

We start by recalling the \emph{Interlacing theorem} which is a consequence of the well-known Courant-Fischer min-max principle, see e.g.~\cite[Chapter 8]{Golub2013matrix} and the references therein.

\begin{theorem}[Interlacing Theorem]
 Let~$K_0$ be a symmetric~$N\times N$ matrix, and~$K_1= v^T \otimes v$ a rank-one matrix generated by a column vector~$v\in \R^n$.
 Then for~$1\le j\le N-2$
 \begin{align}
  \lambda_j(K_0 + K_1) \le \lambda_{j+1}(K_0) \le \lambda_{j+2}(K_0 + K_1), \\
  \lambda_j(K_0 ) \le \lambda_{j+1}(K_0+ K_1) \le \lambda_{j+2}(K_0).
 \end{align}

\end{theorem}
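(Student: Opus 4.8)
The plan is to reduce the four stated inequalities to the classical interlaced sequence
\[
 \lambda_1(K_0)\le\lambda_1(K_0+K_1)\le\lambda_2(K_0)\le\lambda_2(K_0+K_1)\le\cdots\le\lambda_N(K_0)\le\lambda_N(K_0+K_1),
\]
where eigenvalues are listed increasingly, $\lambda_1\le\cdots\le\lambda_N$. This sequence is equivalent to the two families
\[
 \lambda_i(K_0)\le\lambda_i(K_0+K_1)\ (1\le i\le N),\qquad \lambda_i(K_0+K_1)\le\lambda_{i+1}(K_0)\ (1\le i\le N-1).
\]
Reading off consecutive terms then gives, for $1\le j\le N-2$, both $\lambda_j(K_0+K_1)\le\lambda_{j+1}(K_0)\le\lambda_{j+2}(K_0+K_1)$ and $\lambda_j(K_0)\le\lambda_{j+1}(K_0+K_1)\le\lambda_{j+2}(K_0)$, which are exactly the two displayed lines. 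So everything reduces to establishing these two families.

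The two ingredients are the Courant--Fischer min--max principle, in the form $\lambda_i(M)=\min_{\dim S=i}\max_{0\ne x\in S}(x^TMx)/(x^Tx)$, and the structure of the perturbation. Writing $K_1=vv^{T}$, it is symmetric and positive semidefinite of rank one, since $x^TK_1x=(v^Tx)^2\ge0$, and its kernel $W=v^{\bot}$ has dimension $N-1$. The lower bound $\lambda_i(K_0)\le\lambda_i(K_0+K_1)$ is then immediate from semidefiniteness: since $x^T(K_0+K_1)x\ge x^TK_0x$ for every $x$, the Rayleigh quotient of $K_0+K_1$ dominates that of $K_0$ pointwise, and the min--max formula preserves this ordering term by term.

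The only step that uses the rank-one hypothesis in an essential way --- and the main point to get right --- is the upper bound $\lambda_i(K_0+K_1)\le\lambda_{i+1}(K_0)$. Here I let $U$ be the span of eigenvectors of $K_0$ associated with $\lambda_1(K_0),\dots,\lambda_{i+1}(K_0)$, so that $\dim U=i+1$ and $x^TK_0x\le\lambda_{i+1}(K_0)\,x^Tx$ on $U$. A dimension count gives $\dim(U\cap W)\ge(i+1)+(N-1)-N=i$, so I may pick a subspace $S\subseteq U\cap W$ with $\dim S=i$. For $x\in S\subseteq\Ker K_1$ one has $x^T(K_0+K_1)x=x^TK_0x\le\lambda_{i+1}(K_0)\,x^Tx$, and feeding this admissible $S$ into the min--max characterization of $\lambda_i(K_0+K_1)$ yields $\lambda_i(K_0+K_1)\le\max_{0\ne x\in S}(x^T(K_0+K_1)x)/(x^Tx)\le\lambda_{i+1}(K_0)$. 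This closes both families and hence proves the theorem; the only delicate bookkeeping is the intersection-dimension inequality exploiting $\dim\Ker K_1=N-1$, while everything else is monotonicity under a positive semidefinite perturbation.
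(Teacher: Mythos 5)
Your proof is correct, and it follows exactly the route the paper itself indicates: the paper does not prove this theorem but cites it as a consequence of the Courant--Fischer min--max principle (referring to Golub--Van Loan), and your argument---monotonicity under the positive semidefinite perturbation $vv^T$ for one family, and the dimension count $\dim(U\cap\Ker K_1)\ge i$ fed into min--max for the other---is precisely the standard proof behind that citation, consistent with the increasing-order convention the paper uses later. The only cosmetic point is the implicit assumption $v\neq 0$ (so that $\dim v^{\bot}=N-1$); the case $v=0$ is trivial since then $K_1=0$.
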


We apply this theorem to the matrix~$A=K_0 + K_1$, with
\begin{align}
 K_0 = -m_\lambda \begin{pmatrix}  m_1 & & & \\ & m_2 & & \\ & & \ddots &  \\ & & & m_N \end{pmatrix},
\quad
 K_1=\begin{pmatrix}
  m_1^2 & m_1 m_2 & \ldots & m_1 m_N \\
  m_2 m_1 & m_2^2 & \ldots & m_2 m_N \\
  \vdots & \vdots & \ddots & \vdots \\
  m_N m_1 & m_N m_2 & \ldots& m_N^2
  \end{pmatrix}
  =\mathbf{m}^T \mathbf{m}
\end{align}
where~$\mathbf{m}^T=(m_1, m_2,\cdots, m_N)\in \R^N$.
Then we get
\begin{align}
 \lambda_j(A) \le \lambda_{j+1}(K_0).
\end{align}
By the conditions on~$m_i$'s,~$K_0$ has precisely~$N_*\equiv \lceil \frac{N}{2} \rceil$ negative eigenfunctions.
Therefore,
\begin{align}
 \lambda_{N_*-1}(A) \le \lambda_{N_*}(K_0)<0 < \lambda_{N_* +1}(K_0) \le \lambda_{N_* +2}(A),
\end{align}
with only the signs of~$\lambda_{N_*}(A)$ and~$\lambda_{N_* +1}(A)$ left undetermined--but we know that one of them has to be zero!
The min-max principle tells that for the~$k$-the eigenfunction,
\begin{align}
 N_*-1 \le k-1
\end{align}
which implies~$N\le 2k$, namely the~$k$-th eigenfunction has at most~$2k$ generalized nodal domains.

This result is sharp in view of the first radial eigenfunction, as we have seen in the previous section.
For higher radial eigenfunctions, we try to show the sharpness from the matrix viewpoint by some examples.\\
For the second radial eigenfunction, i.e.~$k=2$, consider the matrix~$A$ with
 \begin{align}
  m_1=+5, & & m_2=-3, & & m_3=+5, & & m_4=-3,
 \end{align}
 so that~$m_\lambda=+4$.
 The corresponding matrix~$A$ has eigenvalues
 \begin{align}
  60, \quad 12, \quad 0, \quad -20,
 \end{align}
hence precisely one negative eigenvalue.
The enlarged matrix with~$m_j$ as above,~$1\le j\le 4$, while~$m_5=+5$, would have two negative eigenvalues:
\begin{align}
 75, & & 27, & & 0, & & -45, & & -45.
\end{align}
This cannot happen in terms of the min-max principle~\eqref{eq:general eigenvalue}.

For the third radial eigenfunction, i.e.~$k=3$, similarly consider a matrix with
\begin{align}
  m_1=+5, & & m_2=-3, & & m_3=+5, & & m_4=-3, & & m_5=+5, & & m_6=-3,
 \end{align}
 so that~$m_\lambda=+6$.
 The corresponding matrix~$A$ has eigenvalues
 \begin{align}
  90, & & 18, & & 18, & & 0, & & -30, & & -30,
 \end{align}
 with negative inertia index~$2$!
 If we increase the size~$N$ to~$7$, with~$m_7=+5$, then
 the corresponding~$A_{7\times 7}$ has eigenvalues
 \begin{align}
  105, & & 33, && 33,& & 0, & & -55, & & -55, & & -55,
 \end{align}
 which would again contradict the min-max principle~\eqref{eq:general eigenvalue}.

As remarked in the introduction, we don't know whether the bound~$2k$ is sharp among all radial eigenfunctions.
Note that we cannot, in general, hope for a linear bound of the form~$k+a$, since there exist non-radial eigenfunctions even on a radially symmetric domain such as the ball.
It thus remains open to find the optimal bound of the number of nodal domains.

\

\section{On eigenfunctions with non positive eigenvalues} \label{sect:radial eigenfunctions}

We present here the proof of Lemma~\ref{lemma:negative eigenvalue has radial eigenfunction}.  Let~$e_k(\theta)$,~$\theta\in\sph^{n-1}$, denote the eigenfunctions of the Laplace operator on~$\sph^1$ for the eigenvalues
 \begin{align}
  0=\mu_1< (n-1)=\mu_2 \le \mu_3 \le \cdots,
 \end{align}
 In particular,~$\int_{\sph^{n-1}} e_k(\theta)\dd\theta=0$ for~$k\ge 2$.

 Consider the eigenfunction~$\phi$:
 \begin{align}
  -\Delta\phi-\lambda V_\lambda \oscil{\phi}=\sigma V_\lambda\oscil{\phi}, \qquad \mbox{ in } B_1.
 \end{align}
Since~$\psi_\lambda$ is radially decreasing thanks to~\cite{GidasNiNirenberg1979symmetry}, then~$V_\lambda= f'(\alpha_\lambda+\lambda\psi_\lambda)\geq 0$.
 For~$k\ge 1$, consider the functions~$\bar{\phi}^k\colon (0,1]\to \R$ defined by
 \begin{align}
  \bar{\phi}^k(r)\coloneqq \int_{\sph^{n-1}}^{2\pi} \phi(r,\theta) e_k(\theta)\dd\theta.
 \end{align}
 Then we would have~$\phi=\sum_{k\ge 0} \bar{\phi}^k(r) e_k(\theta)$.
 Note that,~$e_1=1$ is constant and hence~$\bar{\phi}^1 e_1(\theta)$ is a radial function.

 We claim that~$\bar{\phi}^k(r)\equiv 0$ for all~$k\ge 2$.
 By the boundary condition we know that~$\bar{\phi}^k(1)=0$.
 In the interval~$(0,1)$, the function~$\bar{\phi}^k$ satisfies the ODE
 \begin{align}\label{eq:phi-k-radial ODE}
 \frac{\dd^2}{\dd r^2}\bar{\phi}^k
 +\frac{n-1}{r}\frac{\dd}{\dd r}\bar{\phi}^k
 -\frac{\mu_k}{r^2}\bar{\phi}^k
 +\lambda V_\lambda(r) \bar{\phi}^k
 =-\sigma V_\lambda(r)\bar{\phi}^k,
 \end{align}
 as the average part~$\average{\phi}$ doesn't contribute in the integration with respect to~$e_k(\theta)\dd\theta$.
 Suppose~$\bar{\phi}^k$ is not identically zero and let~$r_0$ be the first zero of~$\bar{\phi}^k$.
 W.l.o.g. we may assume that~$\bar{\phi}^k>0$ in~$(0,r_0)$.
 Note that the radial solution~$\psi_\lambda$ satisfies
 \begin{align}\label{eq:ODE for radial psi}
  \frac{\dd^2}{\dd r^2} (\psi'_\lambda)
  +\frac{n-1}{r}\frac{\dd}{\dd r}\psi'_{\lambda}
  +\parenthesis{\lambda V_\lambda -\frac{n-1}{r^2}}\psi'_\lambda=0
 \qquad \mbox{ in } (0,1)
 \end{align}
 and~$\psi'_\lambda(0)=0$, ~$\psi'_\lambda(r)\le 0$ for~$r\in (0,1]$.
 Therefore, multiplying both sides of~\eqref{eq:phi-k-radial ODE} by~$r^{n-1}\psi'_\lambda(r)$ and integrating over~$(0,r_0)$, we have,
 \begin{align}
  \int_0^{r_0} r^{n-1}\psi'_\lambda\frac{\dd^2}{\dd r^2}\bar{\phi}^k
 +(n-1)r^{n-2}\psi'_\lambda\frac{\dd}{\dd r}\bar{\phi}^k
 -\mu_k r^{n-3}\psi'_\lambda \bar{\phi}^k
 +\lambda r^{n-1} V_\lambda(r)\psi'_\lambda \bar{\phi}^k \dd r\\
 =-\sigma \int_0^{r_0} r^{n-1} V_\lambda(r)\psi'_\lambda\bar{\phi}^k \dd r.
 \end{align}
Integration by parts gives
\begin{align}
 \int_0^{r_0} r^{n-1}\psi'_\lambda\frac{\dd^2}{\dd r^2}\bar{\phi}^k \dd r
 &=r_0^{n-1} \psi'_\lambda(r_0)\frac{\dd \bar{\phi}^k}{\dd r}(r_0)\\
  +&\int_0^{r_0} \parenthesis{(n-1)(n-2)r^{n-3}\psi'_\lambda  +2(n-1)r^{n-2}\psi''_\lambda + r^{n-1}\psi'''_\lambda } \bar{\phi}^k \dd r,
\end{align}
\begin{align}
 \int_0^{r_0} (n-1)r^{n-2}\psi'_\lambda\frac{\dd}{\dd r}\bar{\phi}^k \dd r
 =\int_0^{r_0} -(n-1)(n-2)r^{n-3}\psi'_\lambda \bar{\phi}^k -(n-1)r^{n-2}\psi''_\lambda\bar{\phi}^k\dd r,
\end{align}
where we have used the boundary conditions~$\psi'_\lambda(0)=0$,~$\bar{\phi}^k(r_0)=0$.
Thus,
\begin{multline}
 r_0^{n-1}\psi'_\lambda(r_0)\frac{\dd\bar{\phi}^k}{\dd r}(r_0)
 +\int_0^{r_0}  r^{n-1} \parenthesis{\frac{\dd^2}{\dd r^2}\psi'_\lambda+\frac{n-1}{r}\frac{\dd}{\dd r}\psi'_\lambda+\lambda V_\lambda \psi'_\lambda }\bar{\phi}^k\dd r \\
 -\int_0^{r_0} \mu_k r^{n-3}\psi'_\lambda\bar{\phi}_k \dd r
 =-\sigma \int_0^{r_0} r^{n-1} V_\lambda(r) \psi'_\lambda\bar{\phi}^k\dd r.
\end{multline}
Then~\eqref{eq:ODE for radial psi} implies that,
\begin{align}
 r_0^{n-1} \psi'_\lambda(r_0)\frac{\dd\bar{\phi}^k}{\dd r}(r_0)
 +\int_0^{r_0} r^{n-3}(n-1-\mu_k)V_\lambda \psi'_\lambda \bar{\phi}^k\dd r
 =-\sigma\int_0^{r_0} r^{n-1} V_\lambda \psi'_\lambda \bar{\phi}^k\dd r.
\end{align}

Note that~$\mu_k\ge n-1$ for~$k\ge 2$, and~$\frac{\dd}{\dd r}\bar{\phi}^k(r_0)< 0$.
Thus the ~$l.h.s.$ of this equality is positive.
On the other side, if~$\sigma\le 0$, then the~$r.h.s.$ is non-positive unless~$\bar{\phi}^k$ vanishes identically in~$[0,r_0]$.

Therefore, for~$\sigma\le 0$, the eigenfunction~$\phi$ must be radial, and we deduce that,
\begin{align}
 0=(\lambda+\sigma)\int_{B_1} V_\lambda \oscil{\phi}\dx
 =\int_{B_1} -\Delta\phi \dx
 =\int_{\p B_1} -\frac{\p\phi}{\p r}\ds
 = -|\sph^{n-1}| \phi'(1),
\end{align}
which is the same as~$\phi'(1)=0$.

\

\section{On the multiplicity of eigenvalues}
\label{sect:uniqueness}

In this section we are going to prove Proposition~\ref{prop:uniqueness}. Let~$\phi\in H^1_0(B_1)$ be a radial eigenfunction of~$\sigma$, which satisfies~\eqref{eq:eigenfunction-radial}.
In particular, as above, integration by parts gives $\phi'(1)=0$.
If~$\average{\phi}=0$, then~$\phi$ satisfies a classical elliptic PDE with~$\phi|_{\p B_1}=0$ and~$\p_\nu \phi|_{\p B_1}=0$, hence~$\phi\equiv 0$, which is impossible.
Therefore we have $\average{\phi}\neq 0$ as far as $\phi$ is nontrivial.
As a consequence by~\eqref{eq:eigenfunction-radial} we see that on~$\p B_1$,
\begin{align}
 \phi''(1)= (\lambda+\sigma) V_\lambda(1)\average{\phi}\neq 0.
\end{align}

Now if there were two independent eigenfunctions~$\phi_1,\phi_2\in H^1_0(B_1)$ of the eigenvalue~$\sigma$, then we would have,
\begin{align}
 \phi_j(1)=0, & & \phi'_j(1)=0, & & j=1,2
\end{align}
and~$\phi''_1(1)\neq 0$,~$\phi''_2(1)\neq 0$.
Thus we could find a linear combination
\begin{align}
 \Phi\equiv \alpha \phi_1 + \beta\phi_2
\end{align}
for some~$\alpha,\beta\in \R$ such that
\begin{align}
 \Phi''(1)=0.
\end{align}
Note that~$\Phi\in H^1_0(B_1)$ is also a radial eigenfunction of~$\sigma$ with~$\Phi(1)=0$,~$\Phi'(1)=0$.
Then~$\average{\Phi}\neq 0$ as otherwise we would have $\Phi\equiv 0$, which contradicts that~$\phi_1$ and~$\phi_2$ are linearly independent.
But then we should have again~$\Phi''(1)\neq 0$, which is the desired contradiction.

\

\section{concluding remarks: a degenerate case}\label{sec6}

The assumption that~$V_\lambda>0$ on~$\overline{B_1}$ is crucial in treatment of the problem,
as it implies that~$\phi''(1)\neq 0$.
This is usually the case in many applications.
For instance, as far as $\al>0$, to cover the plasma problem (where $f(t)=t^p$, see \cite{BJ1,BJ2}), it is enough to assume
\begin{align}
 f'>0 \qquad \mbox{ in } (0,+\infty)
\end{align}
which implies, for each~$\lambda>0$,
\begin{align}
 V_\lambda(x)=f'(\alpha_\lambda+ \lambda\psi_\lambda)>0
 \mbox{ on } \overline{B_1}
\end{align}
where~$(\alpha_\lambda,\psi_\lambda)$ is a solution of~\eqref{eq:MFE} with $\alpha_{\lambda}>0$.
Another well-known example is the Liouville nonlinearity, where $f(t)=e^t$ and in fact in this case
we are allowed to peak any $\al\in \mathds{R}$ (\cite{B2,BJLY-ARMA(2018)}).


However, in one of our aiming applications (\cite{BJ1,BJ2}) we also need to consider the case where $V_\lambda|_{\p B_1}=0$, more exactly~$f(t)=t^p$ and~$\alpha_{\lambda}=0$ in the above example.
This is a rather delicate limiting case for the study of the stability of the solutions of the plasma problem.
Actually, as far as we just assume $V_\lambda|_{\p B_1}=0$, the arguments provided above for the simplicity of the radial eigenfunctions and for the finiteness of the singular points may be not conclusive in general.
However if we knew that
\begin{align}\label{eq:model assumption}
 \lim_{r\to 1^{-}} \frac{V_\lambda(r)}{(1-r)^\beta} = v_0,
\end{align}
for some~$\beta>0$ and~$v_0>0$, then most of the main properties proved above still hold and
we will sketch the idea of how this is done in the rest of this section.\\
Remark that, interestingly enough, this is exactly what happens for the
model plasma problem with~$f(t)=t^p$ for some~$p>1$ and~$\alpha=0$. Indeed,
in this case by the Hopf Lemma  the radial solution~$\psi_0$ satisfies
$\p_r \psi_0(1)\neq 0$. As a consequence \eqref{eq:model assumption} holds for $V_\lambda$ with $\beta=p-1>0$.\\

We adopt the convention that a function~$u:[0,1]\to \R$ satisfies a~$\beta(>0)$-vanishing condition at~$r=1$ if there exists $a>0$ such that
$$
\lim_{r\to 1^{-}} \frac{u(r)}{(1-r)^\beta} = a.
$$
Then we have
\begin{thm}\label{thmsec7}
Assume that: for some~$0\leq k\in\mathbb{N}$,
\begin{itemize}
 \item $V_\lambda\in C^{k,\gamma}([0,1])$, for some $\gamma \in (0,1)$,
 \item $V_\lambda$ satisfies a~$\beta$-vanishing condition for some~$\beta\in (k, k+1]$.
\end{itemize}
%
Let $\phi$ be a solution of \eqref{eq:eigenfunction-radial} with $\average{\phi}\neq 0$.
Then~$\phi$ satisfies a~$(\beta+2)$-vanishing condition at~$r=1$.
\end{thm}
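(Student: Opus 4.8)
The plan is to read off the boundary behaviour of $\phi$ directly from the radial equation \eqref{eq:eigenfunction-radial} written in divergence form, integrating it twice inward from $r=1$ and extracting the leading order by L'Hôpital's rule. Throughout set $m=\average{\phi}$, which is nonzero by hypothesis and may be taken positive after replacing $\phi$ by $-\phi$; recall also that $\lambda+\sigma>0$ by \eqref{eq:first eigenvalue}, so the sign of the final constant is fixed.

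First I would check that the two boundary facts used in the nondegenerate case survive the vanishing of $V_\lambda$ at~$\p B_1$. Since $\phi\in H^1_0(B_1)$ is radial and continuous up to the boundary, $\phi(1)=0$; and integrating \eqref{eq:eigenfunction-radial} over $B_1$ gives, exactly as in Section~\ref{sect:uniqueness}, $-|\sph^{n-1}|\phi'(1)=(\lambda+\sigma)\int_{B_1}V_\lambda\oscil{\phi}=0$, whence $\phi'(1)=0$. This computation uses only $\int_{B_1}V_\lambda\oscil{\phi}=0$ and not the positivity of $V_\lambda(1)$, so it is unaffected by \eqref{eq:model assumption}. Rewriting the equation in self-adjoint form $(r^{n-1}\phi')'=-(\lambda+\sigma)r^{n-1}V_\lambda(\phi-m)$ and integrating from $r$ to $1$ with $\phi'(1)=0$ yields
\[
 r^{n-1}\phi'(r)=(\lambda+\sigma)\int_r^1 \rho^{n-1}V_\lambda(\rho)\parenthesis{\phi(\rho)-m}\dd\rho .
\]
Since the $\beta$-vanishing condition forces $V_\lambda(\rho)=O\parenthesis{(1-\rho)^\beta}$ while $\phi(\rho)-m$ stays bounded, the integrand is $O\parenthesis{(1-\rho)^\beta}$, so $\phi'(r)=O\parenthesis{(1-r)^{\beta+1}}$ and, integrating once more from $\phi(1)=0$, $\phi(r)=O\parenthesis{(1-r)^{\beta+2}}$.

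The leading constants then come out by two applications of L'Hôpital's rule. In the first quotient both numerator and denominator tend to $0$; differentiating the integral identity above and using $r^{n-1}\to 1$ gives
\[
 \lim_{r\to 1^-}\frac{\phi'(r)}{(1-r)^{\beta+1}}
 =\lim_{r\to 1^-}\frac{(\lambda+\sigma)\,r^{n-1}V_\lambda(r)\parenthesis{\phi(r)-m}}{(\beta+1)(1-r)^\beta}
 =-\frac{(\lambda+\sigma)\,a\,m}{\beta+1},
\]
where $V_\lambda(r)/(1-r)^\beta\to a$ from \eqref{eq:model assumption} and $\phi(r)\to 0$. Using $\phi(r)=-\int_r^1\phi'(\rho)\dd\rho$ and L'Hôpital once more,
\[
 \lim_{r\to 1^-}\frac{\phi(r)}{(1-r)^{\beta+2}}
 =\lim_{r\to 1^-}\frac{\phi'(r)}{-(\beta+2)(1-r)^{\beta+1}}
 =\frac{(\lambda+\sigma)\,a\,m}{(\beta+1)(\beta+2)},
\]
which is strictly positive as $\lambda+\sigma>0$, $a>0$, $m>0$. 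This is exactly the $(\beta+2)$-vanishing condition claimed, with explicit coefficient $\tfrac{(\lambda+\sigma)\,a\,m}{(\beta+1)(\beta+2)}$.

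The double integration is routine; the point requiring care is the legitimacy of L'Hôpital's rule, that is, verifying at each step that the ratio of derivatives genuinely converges — which is precisely what the single asymptotic \eqref{eq:model assumption}, together with the continuity of $\phi$ and $V_\lambda$ and the identity $\phi(1)=0$, supplies. The assumptions $V_\lambda\in C^{k,\gamma}([0,1])$ and $\beta\in(k,k+1]$ enter only to ensure that $\phi$ is a genuine classical solution up to $r=1$, so that the divergence form, the boundary identity $\phi'(1)=0$, and the differentiations under the integral sign are all justified; the leading-order conclusion itself rests solely on $V_\lambda(r)/(1-r)^\beta\to a$.
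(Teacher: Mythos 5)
Your proof is correct, but it takes a genuinely different route from the paper's. The paper \emph{differentiates}: for $k=0$ it evaluates \eqref{eq:eigenfunction-radial} at $r=1$ to obtain $\phi''(1)=0$, uses the $C^{2,\gamma}$ H\"older bounds to kill the lower-order terms after dividing the equation by $(1-r)^\beta$, and concludes that $\phi''$ satisfies a $\beta$-vanishing condition; for $k\ge 1$ it differentiates the equation $k$ times and runs the same scheme on $\phi^{(k+2)}$, descending back to $\phi$ via the L'H\^opital equivalence between vanishing conditions for a function and for its derivatives. You instead \emph{integrate}: writing the equation in divergence form and using $\phi'(1)=0$ (obtained exactly as in the paper, by integrating over $B_1$), you get the integral identity for $r^{n-1}\phi'$, and two applications of L'H\^opital yield the $(\beta+2)$-vanishing of $\phi$ directly, uniformly in $k$, with the explicit constant $\frac{(\lambda+\sigma)\,a\,\average{\phi}}{(\beta+1)(\beta+2)}$. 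Your route is more elementary and in fact shows that the conclusion as stated needs only $V_\lambda\in C^{0,\gamma}$ (to make $\phi$ a classical solution up to $r=1$) together with the $\beta$-vanishing condition; the hypotheses $V_\lambda\in C^{k,\gamma}$ and $\beta\in(k,k+1]$ play no role in your argument. What the paper's heavier argument buys is finer output calibrated to exactly those hypotheses: it shows that every derivative $\phi^{(j)}$, $0\le j\le k+2$, satisfies a $(\beta+2-j)$-vanishing condition, not just $\phi$ itself. One cosmetic point, which the paper's proof shares: the limit you compute has the sign of $\average{\phi}$, so strictly speaking it is $\phi$ or $-\phi$ that satisfies the vanishing condition (whose definition requires a positive limit); your normalization $\average{\phi}>0$ after possibly replacing $\phi$ by $-\phi$ handles this correctly.
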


\begin{proof}
 We first prove the assertion for $k=0$.
 Since $V_\lambda\in C^{0,\gamma}([0,1])$ then by standard elliptic estimates $\phi$ is of class $C^{2,\gamma}$ near $r=1$.
 Clearly~\eqref{eq:eigenfunction-radial} takes the form,
 \begin{align}\label{eq:eigenfunction-radial-sec7}
  \phi''(r)+\frac{n-1}{r}\phi'(r)
  +(\lambda+\sigma)V_\lambda(r)\phi(r)
  =(\lambda+\sigma)V_\lambda(r)\average{\phi},
 \end{align}
 and since $\phi(1)=\phi'(1)=0=V_\lm(1)$, we deduce that~$\phi''(1)=0$ as well.
 Therefore we have
 \begin{align}
  |\phi''(r)|\leq C_2 (1-r)^\gamma,
  & &
  |\phi'(r)|\leq C_1 (1-r)^{1+\gamma},
  & &
  |\phi(r)|\leq C_0 (1-r)^{2+\gamma}.
 \end{align}

Let us divide \eqref{eq:eigenfunction-radial-sec7} by $(1-r)^{\beta}$ and observe that
$$
\frac{1}{r}\frac{\phi^{(1)}(r)}{(1-r)^{\beta}}\leq C \frac{(1-r)^{1+\gamma}}{(1-r)^{\beta}}\leq C
(1-r)^{\gamma}\to 0, \;r\to 1^{-},
$$
$$
\frac{\phi(r)}{(1-r)^{\beta}}\leq C \frac{(1-r)^{2+\gamma}}{(1-r)^{\beta}}\leq C
(1-r)^{1+\gamma}\to 0, \;r\to 1^{-},
$$
whence passing to the limit we find that
$$
\lim\limits_{r\to 1^{-}}\frac{\phi''(r)}{(1-r)^{\beta}}=
\lim\limits_{r\to 1^{-}}(\lm+\sg)\average{\phi}\frac{V_{\lm}(r)}{(1-r)^{\beta}}=(\lm+\sg)\average{\phi}v_0,
$$
which proves the claim for $k=0$.

\

For~$k\ge 1$, observe that a~$C^k$ function~$u$ satisfies a~$\beta(\in (k,k+1])$-vanishing condition at~$r=1$ if and only if~$u^{(j)}$ satisfies a~$(\beta-j)$-vanishing condition for all~$0\le j\le k$.
This is an immediate consequence of L'Hospital's rule.
In particular, for~$1\le j\le k$,
\begin{align}
 \lim_{r\to 1^{-}}\frac{V^{(j)}_\lambda}{(1-r)^{\beta-j}}= v_j
\end{align}
with~$v_j=\beta(\beta-1)\cdots (\beta-j+1)v_0>0$.
Moreover, in our case, it suffices to prove that~$\phi^{(2+k)}$ satisfies a~$(\beta-k)$-vanishing condition.

 Taking the $k$-th derivative of \eqref{eq:eigenfunction-radial} yields an equation of the form,
\begin{align}\label{eq:eigenfunction-radial-sec7-2}
 \phi^{(2+k)}(r)+\sum\limits_{j=1}^{k+2}c_j(r)\phi^{(2+k-j)}(r)
 =(\lambda+\sigma)V_\lambda^{(k)}(r)\average{\phi},
\end{align}
where $c_j(r)$, $j=1,\cdots,k+2$ are smooth functions of $r$ near $r=1$.
Since $V^{(j)}_\lm(1)=0$, $j=1,\cdots,k$,
we have
$$
|\phi^{(2+k-j)}(r)|\leq C_0 (1-r)^{j+\gamma},\;j=0,1,\cdots,2+k.
$$
Let us divide \eqref{eq:eigenfunction-radial-sec7-2} by $(1-r)^{\beta-k}$ and,
recalling that $\beta-k\in (0,1]$, observe that,
$$
\sum\limits_{j=1}^{k+2}|c_j(r)|\left|\frac{\phi^{(2+k-j)}(r)}{(1-r)^{\beta-k}}\right|\leq
C\sum\limits_{j=1}^{k+2}\frac{(1-r)^{j+\gamma}}{(1-r)^{\beta-k}}\leq
C\sum\limits_{j=1}^{k+2}(1-r)^{j-1+\gamma}\to 0,\,r\to 1^{-},
$$
whence passing to the limit we find that
$$
\lim\limits_{r\to 1^{-}}\frac{\phi^{(2+k)}(r)}{(1-r)^{\beta-k}}=
\lim\limits_{r\to 1^{-}}(\lm+\sg)\average{\phi}\frac{V_{\lm}^{(k)}(r)}{(1-r)^{\beta-k}}=(\lm+\sg)\average{\phi}v_k,
$$
which proves the claim for $k\geq 1$.

\end{proof}

%
%
%
\bigskip

Theorem \ref{thmsec7} guarantees that the argument for simplicity of the space of radial eigenfunctions associated to a fixed~$\sigma$ works as well and in particular we deduce that Proposition~\ref{prop:uniqueness} holds under the assumptions about $V_{\lm}$ of Theorem \ref{thmsec7}.\\
For the other results, Lemma~\ref{lemma:negative eigenvalue has radial eigenfunction} holds since the argument in the proof does not require the positivity of $V_\lambda$ at the boundary.\\
As for the main results, Theorem~\ref{thm:radial nodal domains-first} and Theorem~\ref{thm:radial nodal domains-general}, we have to clarify first what we mean by a \emph{generalized nodal domain}, since in this case we cannot impose that $\phi''(1)>0$.
However this degeneracy only occurs at~$r=1$ since the solution~$\psi$ to~\eqref{eq:MFE} is positive in the interior of the domain, whence by definition $V_\lambda$ has the same property as well.
Thus we take the following definition:
\begin{dfn}
 Let~$\phi$ be a radial eigenfunction of~\eqref{eq:eigenfunction-radial} with~$\average{\phi}>0$ in~$B_1$.
 A \emph{singular point} of~$\phi$ is a point~$r_0\in [0,1]$ such that
 \begin{itemize}
  \item $\phi(r_0)=0$, $\phi'(r_0)=0$;
  \item $\phi$ satisfies some~$\beta$-vanishing condition for some~$0<\beta<+\infty$ at~$r=r_0$.
 \end{itemize}
\end{dfn}
If~$V_\lambda$ is positive up to the boundary, then we see that the above definition is equivalent to Definition~\ref{dfn:singular point-I}. On the other side, under the assumptions of Theorem \ref{thmsec7},
we can still use the concept of \emph{generalized nodal domain} as above and
the proof in Section~\ref{sect:proof of first thm} still works in this setting. Indeed Theorem \ref{thmsec7}
in particular guarantees that there is only a finite number of generalized nodal domains.
Therefore the main Theorems~\ref{thm:radial nodal domains-first} and \ref{thm:radial nodal domains-general} are valid as well.

\bigskip

\noindent {\bf Data Availability Statement.}
Data sharing not applicable to this article as no datasets were generated or analyzed during the current study.

\

\bibliographystyle{siam}
\bibliography{nodal-domains}

\end{document}